\definecolor{mygray}{gray}{.9}
\newtheorem{proposition}{Proposition}
\newtheorem{theorem}{Theorem}
\newtheorem{lemma}{Lemma}
\newcommand{\FS}[2]{\displaystyle\frac{#1}{#2}}
\newcommand{\R}{\mathbb{R}}
\numberwithin{equation}{section}
\numberwithin{definition}{section}
\numberwithin{remark}{section}
\numberwithin{theorem}{section}
\numberwithin{proposition}{section}
\numberwithin{lemma}{section}
\numberwithin{remark}{section}
\numberwithin{example}{section}
\numberwithin{figure}{section}
\numberwithin{conjecture}{section}
\numberwithin{table}{section}
\DeclareMathOperator\tr{tr}
\DeclareMathOperator\Span{Span}
\DeclareMathOperator\erf{erf}
\begin{document}

\title{Point Forces in Elasticity Equation and Their Alternatives in Multi Dimensions}

\author{Q. Peng, F.J. Vermolen}
\date{July 29, 2020}
\maketitle

\begin{abstract}
	%% Text of abstract
We consider several mathematical issues regarding models that simulate traction forces exerted by cells. Since the size of cells is much smaller than the size of the domain of computation, one often considers point forces, modelled by Dirac Delta distributions on boundary segments of cells. In the current paper, we treat the forces that are directed normal to the cell boundary and that are directed toward the cell centre. Since it can be shown that there exists no smooth solution, at least not in $\boldsymbol{H^1}$ for solutions to the governing momentum balance equation, we analyse the convergence and quality of approximation. Furthermore, the expected finite element problems that we get necessitate to scrutinize alternative model formulations, such as the use of smoothed Dirac distributions, or the so-called smoothed particle approach as well as the so-called 'hole' approach where cellular forces are modelled through the use of (natural) boundary conditions. In this paper, we investigate and attempt to quantify the conditions for consistency between the various approaches. This has resulted into error analyses in the $L^2$-norm of the numerical solution based on Galerkin principles that entail Lagrangian basis functions. The paper also addresses well-posedness in terms of existence and uniqueness. The current analysis has been performed for the linear steady-state (hence neglecting inertia and damping) momentum equations under the assumption of Hooke's law. 
	
Key words: 	Point forces, Singular solution, Immersed boundary approach, 'Hole' approach, Smoothed Particle Approach
\end{abstract}

%% main text
\section{Introduction}
\label{Intro}
Wound healing is a complicated process of a sequence of cellular events contributing to resurfacing, reconstitution and restoration of the tensile strength of injured skin. Significant damage of dermal tissue often leads to skin contraction. If the contraction of the skin near a joint is large then it may result into a decrease of functionality. If the patient's daily life is impacted as result of the contraction, then one speaks of a contracture. 

In order to improve the patient's quality of life, one aims at reducing the contractile behavior of the skin. To reduce the severity of the contraction, one needs to know the physiological dynamics and time evolution of the underlying biological mechanisms. According to \cite{eichler2006modeling,haertel2014transcriptional,li2011fibroblasts}, the contraction starts developing during the proliferative phase of wound healing. This proliferative phase sets in after the inflammatory phase, in which the immune system is clearing up the debris that resulted from the damage. The proliferative phase usually starts from the second day post-wounding, and commonly lasts two to four weeks. Besides the closure of the epidermis (that is the top layer of skin), the proliferative phase is characterized by ingress of fibroblasts from the surrounding undamaged tissue and differentiation to myofibroblasts, and by the regeneration of collagen by the (myo)fibroblasts. Despite the relatively quick closure of the epidermis, often the restoration of the underlying dermis is still in progress. After closure of the epidermis, the damaged region in the dermis is referred to as a scar instead of a wound. Next to the regeneration of collagen, the (myo)fibroblasts exert contractile forces on their direct surroundings, which will result into contraction of the scar tissue. In human skin, typically volume reductions of 5 - 10\% are commonly observed \citep{enoch2008basic}. 

The current manuscript contains an extension of the work in \cite{koppenol2017biomedical}, which treats a model for the contractile forces exerted by the (myo)fibroblasts. The forces are distinguished into two categories: (1) temporary forces that are exerted as long as the (myo)fibroblasts are actively pulling; and (2) permanent or plastic forces, which are imaginary forces that are introduced to describe the localized plastic deformations of the tissue. This formalism was firstly developed by \citet{vermolen2015semi}, and later extended by \citet{boon2016multi}. The formalism is based on the point forces, which are mathematically incorporated by means of linear combinations of Dirac Delta distributions. The irregular nature of Dirac Delta distributions make the solution to the elliptic boundary value problem from the balance of momentum have a singular solution in the sense that for dimensionality higher than one, no formal solutions in the finite-element space $\boldsymbol{H^1}$ exist. Although in classical finite-element strategies, one uses for instance piecewise linear Lagrangian elements, of which the basis functions are in $\boldsymbol{H^1}$, and therewith one attempts to approximate the solution (which is not in $\boldsymbol{H^1}$) as well as possibly by a function in $\boldsymbol{H^1}$. \citet{bertoluzza2018local} demonstrated the convergence of finite-element solutions by means of piecewise linear Lagrangian elements in multiple dimensions. In our earlier studies \citep{peng2019numerical, peng2019pointforces}, we proved the convergence of solutions obtained by regularization of Dirac Delta distributions, the so-called smoothed particle approach and the so-called 'hole' approach to the solution obtained by Dirac Delta distributions in the one- and two-dimensional cases. In the one-dimensional case, for the sake of completeness, we start with the presentation of force equilibrium with point forces, the equations are given by
\begin{eqnarray}
-\frac{d\sigma}{dx}&=f,&\quad\mbox{Equation of Equilibrium,}\\
\epsilon&=\frac{du}{dx},&\quad\mbox{Strain-Displacement Relation,}\\
\sigma&=E\epsilon,&\quad\mbox{Constitutive Equation.}
\end{eqnarray}
To simplify the equation, we use $E=1$ here, the equations above can be combined to the one-dimensional Laplace equation:
\begin{equation}
-\frac{d^2u}{dx^2}=f.
\end{equation}
We assume that there is a biological cell with size $h$ and centre position $c$ in the computational domain such that 
$0 < c -h/2 < c < c + h/2 < L$. Then the force is given by $f=\delta(x-(c-h/2))-\delta(x-(x+h/2))$. Combined with homogeneous Dirichlet boundary conditions: $$u(0)=0, 
~u(L)=0,$$ the Galerkin form is given by
\[ \left\{
\begin{aligned}
&\text{Find $u_h\in H^1_0((0,L))$, such that} \int_{\Omega}\nabla u_h\nabla\phi_hd\Omega=\phi_h(c-\FS{h}{2})-\phi_h(c+\FS{h}{2}),\\
&\text{for all $\phi_h\in H^1_0((0,L))$.}
\end{aligned}	
\right.\]
The exact solution is $$u(x)=\frac{hx}{L} + (x-(c+\FS{h}{2}))_+ - (x-(c-\FS{h}{2}))_+,$$ where $(x)_+=\max\{0,x\}$.
Note that in one dimension, the solution is piecewise linear and hence in $H^1(\Omega)$, however not in $H^2(\Omega)$. Since most
conventional errors are expressed in the $L^2$--norm of the
second derivative of the solution, one may not apriorily expect
very accurate finite element solutions. 

In the current manuscript we extend the results to general dimensionality. The boundary value problem is stated in Section \ref{Elas_multi}. The 'hole' approach and the smoothed particle approach are developed in Section \ref{Alternative}. Furthermore, we prove consistency between all the alternatives and the immersed boundary approach in multi dimensions. Section \ref{conclusion} displays some conclusions and discussions.

%The boundary value problems for all methods are displayed in Section \ref{MathsModels} for n dimensions. Section \ref{results} shows the numerical results corresponding to the approaches investigated before. In Section \ref{conclusion}, conclusions are delivered. 

\section{Elasticity Equation with Point Sources in Multi Dimensions}
\label{Elas_multi}
Let $\Omega$ be a bounded domain in $\mathbb{R}^n$, then we consider the following balance of momentum where inertial effects have been neglected:
\begin{equation}
\label{Eq_momentum}
-\nabla\cdot\boldsymbol{\sigma}=\boldsymbol{f}.
\end{equation}
Here $\boldsymbol{\sigma}$ denotes the stress tensor and $\boldsymbol{f}$ represents a body force that is exerted within $\Omega$. We consider a linear, homogeneous, isotropic and continuous material; hence, Hooke's Law is used here for the relation between the stress and strain tensors:
\begin{equation}
\label{Eq_sigma}
\boldsymbol{\sigma}=\frac{E}{1+\nu}\left\lbrace \boldsymbol{\epsilon}+\tr(\boldsymbol{\epsilon})\left[\frac{\nu}{1-2\nu} \right]\boldsymbol{I}\right\rbrace,  
\end{equation}
where $E$ is the stiffness of the computational domain, $\nu$ is Poisson's ratio and $\boldsymbol{\epsilon}$ is the infinitesimal Eulerian strain tensor:
\begin{equation}
\label{Eq_epsilon}
\boldsymbol{\epsilon}=\frac{1}{2}\left[ \boldsymbol{\nabla u}+(\boldsymbol{\nabla u})^T\right].
\end{equation}

Within the domain of computation, $\Omega$, we consider the presence of a biological cell, which occupies the portion $\Omega_C$ that is completely embedded within $\Omega$ (hence $\Omega_C $ is a strict subset of $\Omega$). The boundary of the cell $\Gamma_C$ is divided into surface elements. On the centre of each surface element, a point force by means of Dirac Delta distributions, is exerted in the direction of the normal vector that is directed inward into the cell. This results into (see \citep{vermolen2015semi}):
\begin{equation}
\label{Eq_ElasTempeq}
\boldsymbol{f}_t=\sum_{j=1}^{N_S}P(\boldsymbol{x}_j,t)\boldsymbol{n}(\boldsymbol{x}_j)\delta(\boldsymbol{x}-\boldsymbol{x}_j(t))\Delta S(\boldsymbol{x}_j(t)),
\end{equation}  
where $N_S$ is the number of surface elements of the cell, $P(\boldsymbol{x},t)$ is the magnitude of the pulling force exerted at point $\boldsymbol{x}$ and time $t$ per unit of measure (being area in $\mathbb{R}^3$ or length in $\mathbb{R}^2$), $\boldsymbol{n}(\boldsymbol{x})$ is the unit inward pointing normal vector (towards the cell centre) at position $\boldsymbol{x}$, $\boldsymbol{x}_j(t)$ is the midpoint on surface element $j$ of the cell at time $t$ and $\Delta S(\boldsymbol{x}_j)$ is the measure of the surface element $j$. In the general model where we use this principle, we consider transient effects due to migration and possible deformation of the cells. However, since we predominantly focus on the mathematical issues in the current manuscript, we will not consider any time-dependencies and hence $t$ will be removed from the expressions in the remainder of the paper.

In the n-dimensional case, we are solving the boundary value problems described in Eq (\ref{Eq_momentum}), (\ref{Eq_sigma}) and (\ref{Eq_epsilon}). The body force is given in Eq (\ref{Eq_ElasTempeq}). Therefore, the immersed boundary value problem that we are going to consider is given by
\begin{equation*}
\label{Eq_direct}
(BVP)\left\{
\begin{aligned}
-\nabla\cdot\boldsymbol{\sigma}(\boldsymbol{x})&=\sum_{j=1}^{N_S}P(\boldsymbol{x}_j)\boldsymbol{n}(\boldsymbol{x}_j)\delta(\boldsymbol{x}-\boldsymbol{x}_j)\Delta S(\boldsymbol{x}_j),&\mbox{in $\Omega$,}\\
\boldsymbol{u}&=\boldsymbol{0},&\mbox{on $\partial\Omega$.}
\end{aligned}
\right.
\end{equation*} 
Next to this boundary value problem, we consider the continuous immersed boundary 
counterpart, given by 
\begin{equation*}
\label{Eq_direct2}
(BVP_{\infty})\left\{
\begin{aligned}
-\nabla\cdot\boldsymbol{\sigma}(\boldsymbol{x})&=\int_{\Gamma_C}P(\boldsymbol{x}')\boldsymbol{n}(\boldsymbol{x}')\delta(\boldsymbol{x}-\boldsymbol{x}')dS(\boldsymbol{x}'),&\mbox{in $\Omega$,}\\
\boldsymbol{u}&=\boldsymbol{0},&\mbox{on $\partial\Omega$,}
\end{aligned}
\right.
\end{equation*} 
where we take $N_s\rightarrow\infty$. Thus, the body force reads as 
\begin{equation}
\label{Eq_force_int}
\boldsymbol{f}_t^\infty = \int_{\Gamma_C}P(\boldsymbol{x}')\boldsymbol{n}(\boldsymbol{x}')\delta(\boldsymbol{x}-\boldsymbol{x}')dS(\boldsymbol{x}').
\end{equation}

Due to the irregular nature of the Dirac Delta distributions, the solutions do not exist in $\boldsymbol{H^1}$. We attempt to approximate the solution by the functions in $\boldsymbol{H^1}$ via the Galerkin form of $(BVP)$ and $(BVP_\infty)$. In this manuscript, piecewise linear Lagrangian basis functions are selected. Further, the convergence of the finite-element solutions using linear Lagrangian elements in general dimensionality has been proved in \cite{bertoluzza2018local}.

To construct the Galerkin form, we introduce the bilinear form
$a(.,.)$
\begin{equation}
\label{Eq_a}
a(\boldsymbol{u}_h,\boldsymbol{\phi}_h)=\int_{\Omega}\boldsymbol{\sigma}(\boldsymbol{u}_h):\nabla\boldsymbol{\phi}_hd\Omega =
\int_{\Omega}\boldsymbol{\sigma}(\boldsymbol{u}_h):\boldsymbol{\epsilon}(\boldsymbol{\phi}_h)d\Omega,
\end{equation}
where the last step is motivated by symmetry of the stress tensor
$\boldsymbol{\sigma}$.
Since the solution $\boldsymbol{u}$ is not in $\boldsymbol{H^1}(\Omega)$, we consider a subspace of $\boldsymbol{H^1}(\Omega)$, which is defined as $\boldsymbol{V_h}(\Omega)=\Span\{\boldsymbol{\phi^1},\boldsymbol{\phi^2}, \dots, \boldsymbol{\phi^N}\}$ \citep{scott1973finite}. Here, $\boldsymbol{\phi^i}$ for $i=\{1,2,\dots,N\}$ is the linear Lagrangian basis function, which is piecewise smooth and continuous over $\Omega$. Hence, these basis functions are in $\boldsymbol{H^1}$. Subsequently, the Galerkin form is 
\[(GF)\left\{
\begin{aligned}
&\text{Find $\boldsymbol{u}_h\in\boldsymbol{V}_h(\Omega)$, such that }\\ &a(\boldsymbol{u}_h,\boldsymbol{\phi}_h)=(\boldsymbol{\phi}_h,\boldsymbol{f}_t)=\sum_{j=1}^{N_S}P(\boldsymbol{x}_j)\boldsymbol{n}(\boldsymbol{x}_j)\boldsymbol{\phi}_h(\boldsymbol{x}_j)\Delta S(\boldsymbol{x}_j), \\
&\text{for all $\boldsymbol{\phi}_h\in \{\boldsymbol{\phi^1},\boldsymbol{\phi^2}, \dots, \boldsymbol{\phi^N}\}\subset\boldsymbol{V}_h(\Omega)$.}
\end{aligned}
\right.\]
We further consider the solution to the continuous immerse boundary problem, with the
following Galerkin form:
\[(GF_{\infty})\left\{
\begin{aligned}
&\text{Find $\boldsymbol{u}_h\in\boldsymbol{V}_h(\Omega)$, such that }\\ 
&a(\boldsymbol{u}_h,\boldsymbol{\phi}_h)=(\boldsymbol{\phi}_h,\boldsymbol{f}^\infty_t)=\int_{\Gamma_C} P(\boldsymbol{x}')\boldsymbol{n}(\boldsymbol{x}')\boldsymbol{\phi}_h(\boldsymbol{x}')
dS(\boldsymbol{x}'), \\
&\text{for all $\boldsymbol{\phi}_h\in \{\boldsymbol{\phi^1},\boldsymbol{\phi^2}, \dots, \boldsymbol{\phi^N}\}\subset\boldsymbol{V}_h(\Omega)$.}
\end{aligned}
\right.\]

Before we proceed to claim the existence and the uniqueness of the Galerkin solution in $(GF)$, we state Korn's Inequality in multiple dimensions:
\begin{lemma}\label{lemma_korn}{\bf (Korn's Second Inequality\citep{braess2007finite})}
Let $\Omega\subset\R^n$ be an open, bounded and connected domain.
Then there exists a positive constant $K$, such that for any vector-valued function $\boldsymbol{u}\in \boldsymbol{H^1_0}(\Omega)$,
$$\int_{\Omega}||\boldsymbol{\epsilon} (\boldsymbol{u})||^2d\Omega
\geqslant K\|\boldsymbol{u}\|^2_{\boldsymbol{H^1}(\Omega)}.$$ 	
\end{lemma}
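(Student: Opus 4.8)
The plan is to reduce the inequality to the Poincar\'e inequality by means of the classical algebraic identity that, for smooth compactly supported vector fields, expresses the $L^2$-norm of the symmetric gradient through the full gradient and the divergence. First I would establish the identity on $\boldsymbol{C}^\infty_c(\Omega)$: writing $\epsilon_{ij}(\boldsymbol{u})=\tfrac12(\partial_i u_j+\partial_j u_i)$ and expanding the square gives
\[
\|\boldsymbol{\epsilon}(\boldsymbol{u})\|^2=\tfrac12\sum_{i,j}(\partial_i u_j)^2+\tfrac12\sum_{i,j}\partial_i u_j\,\partial_j u_i .
\]
Integrating the cross term over $\Omega$ and integrating by parts twice --- the boundary contributions vanish because $\boldsymbol{u}$ has compact support --- turns $\int_\Omega\partial_i u_j\,\partial_j u_i\,d\Omega$ into $\int_\Omega\partial_j u_j\,\partial_i u_i\,d\Omega$, so that the summed cross term equals $\int_\Omega(\nabla\cdot\boldsymbol{u})^2\,d\Omega$. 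Hence
\[
\int_\Omega\|\boldsymbol{\epsilon}(\boldsymbol{u})\|^2\,d\Omega=\tfrac12\int_\Omega|\nabla\boldsymbol{u}|^2\,d\Omega+\tfrac12\int_\Omega(\nabla\cdot\boldsymbol{u})^2\,d\Omega\geqslant\tfrac12\int_\Omega|\nabla\boldsymbol{u}|^2\,d\Omega .
\]

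Next I would pass from $\boldsymbol{C}^\infty_c(\Omega)$ to all of $\boldsymbol{H}^1_0(\Omega)$ by density: both sides of the displayed inequality are continuous in the $\boldsymbol{H}^1(\Omega)$-norm and $\boldsymbol{C}^\infty_c(\Omega)$ is dense in $\boldsymbol{H}^1_0(\Omega)$, so $\int_\Omega\|\boldsymbol{\epsilon}(\boldsymbol{u})\|^2\,d\Omega\geqslant\tfrac12\|\nabla\boldsymbol{u}\|^2_{\boldsymbol{L}^2(\Omega)}$ holds for every $\boldsymbol{u}\in\boldsymbol{H}^1_0(\Omega)$. Finally, since $\Omega$ is bounded, the Poincar\'e inequality furnishes a constant $C_P>0$ with $\|\boldsymbol{u}\|^2_{\boldsymbol{L}^2(\Omega)}\leqslant C_P\|\nabla\boldsymbol{u}\|^2_{\boldsymbol{L}^2(\Omega)}$ on $\boldsymbol{H}^1_0(\Omega)$, whence $\|\boldsymbol{u}\|^2_{\boldsymbol{H}^1(\Omega)}\leqslant(1+C_P)\|\nabla\boldsymbol{u}\|^2_{\boldsymbol{L}^2(\Omega)}$. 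Combining the two estimates yields the claim with $K=1/\bigl(2(1+C_P)\bigr)$.

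The algebraic expansion and the two integrations by parts are routine; the only step requiring genuine care is the approximation argument, i.e.\ checking that an identity valid on smooth compactly supported fields extends to the closure $\boldsymbol{H}^1_0(\Omega)$, together with a clean invocation (or brief proof) of the Poincar\'e inequality --- this is where boundedness of $\Omega$ is actually used. I expect the main obstacle to be merely expository rather than substantive: for the $\boldsymbol{H}^1_0$ version treated here, the quotient-by-rigid-motions compactness argument that makes the general Korn inequality on $\boldsymbol{H}^1(\Omega)$ delicate does not enter, so no serious difficulty arises.
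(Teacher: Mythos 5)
Your argument is correct, but it is worth noting that the paper does not prove this lemma at all: it is stated as a citation to Braess's book, so there is no ``paper proof'' to match. What you have supplied is the standard self-contained proof of what is usually called Korn's \emph{first} inequality: the algebraic identity $\int_\Omega\|\boldsymbol{\epsilon}(\boldsymbol{u})\|^2\,d\Omega=\tfrac12\int_\Omega|\nabla\boldsymbol{u}|^2\,d\Omega+\tfrac12\int_\Omega(\nabla\cdot\boldsymbol{u})^2\,d\Omega$ on $\boldsymbol{C}^\infty_c(\Omega)$, extension by density, and Poincar\'e to upgrade the seminorm to the full $\boldsymbol{H}^1$-norm. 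Each step checks out (the double integration by parts of the cross term, the continuity of both sides in $\boldsymbol{H}^1$, and the constant $K=1/(2(1+C_P))$), and since the lemma as stated is restricted to $\boldsymbol{H}^1_0(\Omega)$ this elementary route fully suffices; your proof buys the reader a concrete constant and removes the reliance on an external reference. The one thing to flag is the remark the authors make immediately after the lemma, namely that the inequality generalises to the case where $\boldsymbol{u}=\boldsymbol{0}$ is imposed only on a part of the boundary of positive measure: your integration-by-parts identity picks up uncontrolled boundary terms there, so that generalisation genuinely requires the compactness (Rellich plus argument by contradiction, quotient by infinitesimal rigid motions) machinery behind Korn's second inequality, and your proof should not be read as covering it.
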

We note that Korn's Second Inequality can be generalised to cases in
which the boundary condition $\boldsymbol{u} = \boldsymbol{0}$ is
imposed only on a non-zero measure part of the boundary. Using Korn's Second
inequality gives the following lemma:
\begin{lemma}\label{lemma_coerc}
Let $\Omega\subset\R^n$ be an open, bounded and connected domain.
Then there exists a positive constant $K$, such that for any vector-valued function $\boldsymbol{u}\in \boldsymbol{H^1_0}(\Omega)$,
$$a(\boldsymbol{u},\boldsymbol{u}) = \int_{\Omega} \boldsymbol{\sigma}(\boldsymbol{u}) : 
\boldsymbol{\epsilon}(\boldsymbol{u}) d \Omega \geqslant
K ||\boldsymbol{u}||^2_{\boldsymbol{H^1}(\Omega)}.$$
\end{lemma}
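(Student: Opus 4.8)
The plan is to reduce the coercivity estimate to Korn's Second Inequality (Lemma~\ref{lemma_korn}) by first establishing a pointwise lower bound for the integrand. I would insert Hooke's Law \eqref{Eq_sigma} into the definition \eqref{Eq_a} of $a(\cdot,\cdot)$ and use the bilinearity of the double contraction together with $\boldsymbol{I}:\boldsymbol{\epsilon}(\boldsymbol{u}) = \tr\boldsymbol{\epsilon}(\boldsymbol{u})$ to obtain the identity
\[
\boldsymbol{\sigma}(\boldsymbol{u}):\boldsymbol{\epsilon}(\boldsymbol{u})
= \frac{E}{1+\nu}\left( \|\boldsymbol{\epsilon}(\boldsymbol{u})\|^2 + \frac{\nu}{1-2\nu}\,\big(\tr\boldsymbol{\epsilon}(\boldsymbol{u})\big)^2 \right),
\]
valid pointwise almost everywhere in $\Omega$. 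This is the routine algebraic step.

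Next I would argue that the right-hand side dominates $c_0\,\|\boldsymbol{\epsilon}(\boldsymbol{u})\|^2$ for some constant $c_0 = c_0(E,\nu,n) > 0$. Under the standard assumptions $E > 0$ and $-1 < \nu < \tfrac12$ the prefactor $\tfrac{E}{1+\nu}$ is strictly positive. If $\nu \in [0,\tfrac12)$ the trace term is nonnegative and one reads off $c_0 = \tfrac{E}{1+\nu}$ immediately. If $\nu < 0$, I would apply the Cauchy--Schwarz bound $\big(\tr\boldsymbol{\epsilon}(\boldsymbol{u})\big)^2 = \big(\boldsymbol{I}:\boldsymbol{\epsilon}(\boldsymbol{u})\big)^2 \leqslant n\,\|\boldsymbol{\epsilon}(\boldsymbol{u})\|^2$; multiplying by the now negative factor $\tfrac{\nu}{1-2\nu}$ reverses this inequality and yields
\[
\boldsymbol{\sigma}(\boldsymbol{u}):\boldsymbol{\epsilon}(\boldsymbol{u})
\geqslant \frac{E}{1+\nu}\cdot\frac{1+(n-2)\nu}{1-2\nu}\,\|\boldsymbol{\epsilon}(\boldsymbol{u})\|^2,
\]
and the factor $1+(n-2)\nu$ is positive: trivially for $n \leqslant 2$, and for $n = 3$ precisely because $\nu > -1$. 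Hence a positive $c_0$ exists in every admissible case.

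Finally I would integrate the pointwise bound $\boldsymbol{\sigma}(\boldsymbol{u}):\boldsymbol{\epsilon}(\boldsymbol{u}) \geqslant c_0\,\|\boldsymbol{\epsilon}(\boldsymbol{u})\|^2$ over $\Omega$ and invoke Lemma~\ref{lemma_korn}:
\[
a(\boldsymbol{u},\boldsymbol{u}) \geqslant c_0 \int_{\Omega} \|\boldsymbol{\epsilon}(\boldsymbol{u})\|^2\,d\Omega \geqslant c_0\,K_{\mathrm{Korn}}\,\|\boldsymbol{u}\|^2_{\boldsymbol{H}^1(\Omega)},
\]
so the claim follows with $K = c_0\,K_{\mathrm{Korn}}$, and the estimate then restricts to $\boldsymbol{V}_h(\Omega) \subset \boldsymbol{H}^1_0(\Omega)$, which is what is needed for the subsequent Lax--Milgram argument. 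I do not expect a genuine obstacle: the only point demanding care is the sign bookkeeping for the trace term when Poisson's ratio is negative, and checking that the admissible range of $\nu$ keeps $c_0$ strictly positive — a subtlety that disappears entirely if one restricts to $\nu \in [0,\tfrac12)$.
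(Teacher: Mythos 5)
Your proposal is correct and follows essentially the same route as the paper: insert Hooke's law into $a(\boldsymbol{u},\boldsymbol{u})$, observe that the integrand equals $\frac{E}{1+\nu}\|\boldsymbol{\epsilon}(\boldsymbol{u})\|^2 + \frac{E\nu}{(1+\nu)(1-2\nu)}(\tr\boldsymbol{\epsilon}(\boldsymbol{u}))^2$, drop the trace term, and apply Korn's Second Inequality. The only difference is that you additionally treat the case $\nu<0$ via the Cauchy--Schwarz bound $(\tr\boldsymbol{\epsilon})^2\leqslant n\|\boldsymbol{\epsilon}\|^2$, a sign issue the paper passes over silently by implicitly assuming $\nu\in[0,\tfrac12)$.
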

\begin{proof}
The lemma directly follows from the definition of the stress tensor,
let $\boldsymbol{u} \in \boldsymbol{H^1_0}(\Omega)$:

\begin{align*}
a(\boldsymbol{u},\boldsymbol{u}) &= \int_{\Omega}\boldsymbol{\sigma}(\boldsymbol{u}):\boldsymbol{\epsilon}(\boldsymbol{u})d\Omega =
\int_{\Omega} \frac{E}{1+\nu}\left\{ \boldsymbol{\epsilon}(\boldsymbol{u})+\text{tr}(\boldsymbol{\epsilon}(\boldsymbol{u}))
\frac{\nu}{1-2 \nu}
\boldsymbol{I}\right\} :\boldsymbol{\epsilon}(\boldsymbol{u}) d \Omega \\ 
&= \int_{\Omega} \frac{E}{1+\nu} ||\boldsymbol{\epsilon}(\boldsymbol{u})||^2 + \frac{E \nu}{(1+\nu)(1-2\nu)}
(\tr(\boldsymbol{\epsilon}(\boldsymbol{u})))^2 d \Omega\\ 
&\geqslant \frac{E}{1+\nu} K
|| \boldsymbol{u} ||^2_{\boldsymbol{H^1}(\Omega)}.
\end{align*}

The last step follows from Lemma \ref{lemma_korn}. Hence, redefining
$K := \frac{E}{1+\nu} K$ concludes
the proof the lemma.
\end{proof}

Herewith, coerciveness of the linear form $a(.,.)$ has been demonstrated, which
is needed for the proof of existence and uniqueness of the 
Galerkin finite-element solution.
\begin{theorem}\label{Th_solexist}
Let $\{\boldsymbol{\phi^i}\}$ be piecewise Lagrangian basis field functions and let $\boldsymbol{F}$ be a vector
in $\mathbb{R}^n$ with unit length, further let $P\in C(\overline{\Omega})$,
and let $|P| \leqslant M_2$ for some $M_2 > 0$. We define $\boldsymbol{V}_h(\Omega)=\Span\{\boldsymbol{\phi^1},\boldsymbol{\phi^2}, \dots, \boldsymbol{\phi^N}\}\subset \boldsymbol{H^1_0}(\Omega)$, then
\begin{itemize}
	\item $\exists ~ ! ~ \boldsymbol{u}^G_h(\boldsymbol{x};\boldsymbol{x}';\boldsymbol{F})\in\boldsymbol{V}_h(\Omega)$ such that
	$a(\boldsymbol{u}_h,\boldsymbol{\phi}_h)=
	\boldsymbol{F}(\boldsymbol{x}') \cdot \boldsymbol{\phi}_h(\boldsymbol{x}') $
	for all $\boldsymbol{\phi}_h\in\boldsymbol{V}_h$; 
	\item $\exists ~ ! ~ \boldsymbol{u}_h\in\boldsymbol{V}_h(\Omega)$ such that
	$a(\boldsymbol{u}_h,\boldsymbol{\phi}_h)=\sum_{j=1}^{N_S}P(\boldsymbol{x}_j)\boldsymbol{n}(\boldsymbol{x}_j)\boldsymbol{\phi}_h(\boldsymbol{x}_j)\Delta S(\boldsymbol{x}_j)$ for all $\boldsymbol{\phi}_h\in\boldsymbol{V}_h$, and 
	$\boldsymbol{u}_h = \sum_{j=1}^{N_S} P({\bf x}_j) 
	{\bf u}_h^G({\bf x};{\bf x}_j;{\bf n}({\bf x}_j)) \Delta 
	S({\bf x}_j) $;  
	\item $\exists ~ ! ~ \boldsymbol{u}_h\in\boldsymbol{V}_h(\Omega)$ such that
	$a(\boldsymbol{u}_h,\boldsymbol{\phi}_h)=\int_{\Gamma_C}P(\boldsymbol{x'})\boldsymbol{n}(\boldsymbol{x'})\boldsymbol{\phi}_h(\boldsymbol{x'})dS(\boldsymbol{x}')$ for all $\boldsymbol{\phi}_h\in\boldsymbol{V}_h$, and
	$\boldsymbol{u}_h=\int_{\Gamma_C} P({\bf x}') {\bf u}_h^G({\bf x};{\bf x}';{\bf n}({\bf x}')) dS(\boldsymbol{ x}')$;
	%	\item For ${\bf x} \notin \Gamma_C:
	%|{\bf u}_h - {\bf u}_h^{\Delta S}| \le C \Delta S^2$ where
	%$\displaystyle{\Delta S = \max_{j \in \{1,\ldots,m\}} \Delta S({\bf x}_j)}$, if ${\bf x}_j$
	%is the midpoint of boundary
	%element $\Delta S({\bf x}_j)$.
\end{itemize}
\end{theorem}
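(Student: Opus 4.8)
The plan is to obtain all three existence-uniqueness claims as applications of the Lax--Milgram theorem on the finite-dimensional space $\boldsymbol{V}_h(\Omega)$, and then to derive the explicit superposition formulas by exploiting linearity of the problem in the right-hand side. First I would observe that $\boldsymbol{V}_h(\Omega)$, being finite-dimensional and a closed subspace of $\boldsymbol{H}^1_0(\Omega)$, is itself a Hilbert space under the $\boldsymbol{H}^1(\Omega)$ inner product. On this space the bilinear form $a(\cdot,\cdot)$ is clearly bilinear, is bounded (boundedness follows from Cauchy--Schwarz together with the fact that the entries of the elasticity tensor are finite constants, so $|a(\boldsymbol{u},\boldsymbol{v})| \leqslant C \|\boldsymbol{u}\|_{\boldsymbol{H}^1}\|\boldsymbol{v}\|_{\boldsymbol{H}^1}$), and is coercive by Lemma~\ref{lemma_coerc}. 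Therefore Lax--Milgram applies once the right-hand side is shown to be a bounded linear functional on $\boldsymbol{V}_h(\Omega)$.

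Next I would verify in each of the three cases that the right-hand side is indeed a bounded linear functional on $\boldsymbol{V}_h(\Omega)$. For the first bullet the functional is $\boldsymbol{\phi}_h \mapsto \boldsymbol{F}(\boldsymbol{x}')\cdot\boldsymbol{\phi}_h(\boldsymbol{x}')$; linearity is immediate, and boundedness follows because point evaluation $\boldsymbol{\phi}_h \mapsto \boldsymbol{\phi}_h(\boldsymbol{x}')$ is a bounded operator on the \emph{finite-dimensional} space $\boldsymbol{V}_h(\Omega)$ (all norms on a finite-dimensional space are equivalent, so $|\boldsymbol{\phi}_h(\boldsymbol{x}')| \leqslant C_{\boldsymbol{x}'} \|\boldsymbol{\phi}_h\|_{\boldsymbol{H}^1}$) combined with $|\boldsymbol{F}|=1$. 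This is exactly the point where the paper's whole setup matters: the delta functional is \emph{not} bounded on $\boldsymbol{H}^1_0(\Omega)$ for $n\geqslant 2$, but it \emph{is} bounded on the finite-dimensional subspace $\boldsymbol{V}_h(\Omega)$, and that is what rescues well-posedness of the discrete problem. For the second bullet the functional is the finite sum $\boldsymbol{\phi}_h \mapsto \sum_{j=1}^{N_S} P(\boldsymbol{x}_j)\boldsymbol{n}(\boldsymbol{x}_j)\cdot\boldsymbol{\phi}_h(\boldsymbol{x}_j)\Delta\Gamma(\boldsymbol{x}_j)$, which is a finite linear combination of such bounded point-evaluation functionals (here one uses $|P(\boldsymbol{x}_j)|\leqslant M_2$ and $|\boldsymbol{n}(\boldsymbol{x}_j)|=1$), hence bounded. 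For the third bullet the functional is $\boldsymbol{\phi}_h\mapsto \int_{\Gamma_c}P(\boldsymbol{x}')\boldsymbol{n}(\boldsymbol{x}')\cdot\boldsymbol{\phi}_h(\boldsymbol{x}')\,dS(\boldsymbol{x}')$; linearity is clear, and boundedness follows by pulling the $\boldsymbol{H}^1$-norm bound on the point evaluation out of the integral and using that $P\in C(\overline{\Omega})$ (so $|P|\leqslant M_2$), $|\boldsymbol{n}|=1$, and $\Gamma_c$ has finite measure, giving $|\cdot| \leqslant M_2 |\Gamma_c| \sup_{\boldsymbol{x}'\in\Gamma_c} C_{\boldsymbol{x}'} \|\boldsymbol{\phi}_h\|_{\boldsymbol{H}^1}$; one should note $\sup_{\boldsymbol{x}'\in\overline{\Omega}}C_{\boldsymbol{x}'}$ is finite because the evaluation constant depends continuously on $\boldsymbol{x}'$ over the compact set $\overline{\Omega}$. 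In each case Lax--Milgram then yields existence and uniqueness of the corresponding $\boldsymbol{u}_h\in\boldsymbol{V}_h(\Omega)$.

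Finally I would establish the superposition identities. Writing $\boldsymbol{u}_h^G(\boldsymbol{x};\boldsymbol{x}';\boldsymbol{F})$ for the unique solution of the first problem, I would define the candidate $\boldsymbol{w}_h := \sum_{j=1}^{N_S} P(\boldsymbol{x}_j)\,\boldsymbol{u}_h^G(\boldsymbol{x};\boldsymbol{x}_j;\boldsymbol{n}(\boldsymbol{x}_j))\,\Delta S(\boldsymbol{x}_j)$, which lies in $\boldsymbol{V}_h(\Omega)$ as a finite linear combination; by bilinearity of $a(\cdot,\cdot)$ in its first argument and the defining property of each $\boldsymbol{u}_h^G$, for every $\boldsymbol{\phi}_h\in\boldsymbol{V}_h(\Omega)$ one computes $a(\boldsymbol{w}_h,\boldsymbol{\phi}_h) = \sum_j P(\boldsymbol{x}_j)\,a(\boldsymbol{u}_h^G(\cdot;\boldsymbol{x}_j;\boldsymbol{n}(\boldsymbol{x}_j)),\boldsymbol{\phi}_h)\,\Delta S(\boldsymbol{x}_j) = \sum_j P(\boldsymbol{x}_j)\,\boldsymbol{n}(\boldsymbol{x}_j)\cdot\boldsymbol{\phi}_h(\boldsymbol{x}_j)\,\Delta S(\boldsymbol{x}_j)$, which is exactly the right-hand side of the second problem; hence by the uniqueness already proved, $\boldsymbol{u}_h = \boldsymbol{w}_h$. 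For the third identity the same argument applies with the finite sum replaced by the integral over $\Gamma_c$: one sets $\boldsymbol{v}_h := \int_{\Gamma_c} P(\boldsymbol{x}')\,\boldsymbol{u}_h^G(\boldsymbol{x};\boldsymbol{x}';\boldsymbol{n}(\boldsymbol{x}'))\,dS(\boldsymbol{x}')$ (a Bochner integral of a continuous $\boldsymbol{V}_h(\Omega)$-valued integrand over a compact set, hence well-defined and in $\boldsymbol{V}_h(\Omega)$), and uses that $a(\cdot,\boldsymbol{\phi}_h)$ is a bounded linear functional to interchange it with the integral, obtaining $a(\boldsymbol{v}_h,\boldsymbol{\phi}_h)=\int_{\Gamma_c}P(\boldsymbol{x}')\,\boldsymbol{n}(\boldsymbol{x}')\cdot\boldsymbol{\phi}_h(\boldsymbol{x}')\,dS(\boldsymbol{x}')$, so again uniqueness forces $\boldsymbol{u}_h=\boldsymbol{v}_h$.

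The routine parts are boundedness of $a$ and the Lax--Milgram invocation; the genuinely load-bearing observation — and the one I would state carefully — is that point evaluation is a bounded functional on the finite-dimensional subspace $\boldsymbol{V}_h(\Omega)$ even though it is unbounded on $\boldsymbol{H}^1_0(\Omega)$ for $n\geqslant 2$, with the evaluation constant depending continuously (hence uniformly boundedly over $\overline{\Omega}$) on the evaluation point; that uniform bound in $\boldsymbol{x}'$ is what makes the third, integral, case go through. A small technical point worth a sentence is the measurability/continuity of $\boldsymbol{x}'\mapsto\boldsymbol{u}_h^G(\boldsymbol{x};\boldsymbol{x}';\boldsymbol{n}(\boldsymbol{x}'))$ as a $\boldsymbol{V}_h(\Omega)$-valued map, which follows from continuity of $\boldsymbol{x}'\mapsto\boldsymbol{\phi}_h(\boldsymbol{x}')$ and of $\boldsymbol{x}'\mapsto\boldsymbol{n}(\boldsymbol{x}')$ together with the continuous dependence of the Lax--Milgram solution on the right-hand side functional.
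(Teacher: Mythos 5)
Your proof is correct and follows essentially the same route as the paper: Lax--Milgram on $\boldsymbol{V}_h(\Omega)$ with coercivity supplied by Lemma~\ref{lemma_coerc}, boundedness of the point-evaluation right-hand sides, and the superposition formulas obtained by substituting the candidate, using bilinearity of $a(\cdot,\cdot)$, and invoking uniqueness. You are in fact slightly more careful than the paper on two technical points --- justifying boundedness of point evaluation via norm equivalence on the finite-dimensional space $\boldsymbol{V}_h(\Omega)$ rather than merely via pointwise boundedness of the Lagrangian basis functions, and justifying the $\boldsymbol{V}_h(\Omega)$-valued integral in the third bullet as a Bochner integral --- but these are refinements of, not departures from, the paper's argument.
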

\begin{proof}
\begin{itemize}
	\item It is immediately clear that $a(.,.)$ is a bilinear form. We have 
	$\boldsymbol{V}_h \subset \boldsymbol{H^1_0}(\Omega)$,
	and $a(.,.)$ is bounded in $\boldsymbol{H_0^1}(\Omega)$ (see for instance
	\citep{atkinson2005theoretical}). Furthermore, Lemma \ref{lemma_coerc} says that $a(.,.)$ is
	coercive in $\boldsymbol{H}_0^1(\Omega)$. Regarding the right-hand side, we have
	$|\boldsymbol{\phi}_h| \le M_1$ for some $M_1 > 0$ since $\boldsymbol{\phi}_h$ is a
	Lagrangian function, and hence the magnitude of the 
	right-hand side can be bounded from above by
	$$
	|\boldsymbol{F} \cdot \boldsymbol{\phi}_h(\boldsymbol{x}')| \leqslant M_1.
	$$
	Note that $||\boldsymbol{F}|| = 1$. Hence the right-hand side is bounded, since we are looking for a solution in a finite dimensional space $\boldsymbol{V}_h$, the system $$A \boldsymbol{c} = \boldsymbol{b},$$ where the coefficients of the symmetrix matrix $A$ are defined by $a_{ij} = a(\phi_i,\phi_j)$, and where a limited number of entries of $\boldsymbol{b}$ are non-zero and given by $\boldsymbol{F} \cdot \boldsymbol{\phi}_h(\boldsymbol{x}')$, which is finite. Since $\boldsymbol{b}$ is finite, and  $A$ is invertible, existence and uniqueness of $\boldsymbol{u}_h$ follow (one could apply Lax-Milgram's theorem on the space $\mathbb{R}^n$ in this context) from the algebraic system.
	\item Existence and uniqueness follow analogously, only boundedness of the right-hand
	side, which is a linear functional in $\boldsymbol{\phi}_h \in \boldsymbol{V}_h(\Omega)$ 
	has to be checked: 
	\begin{align*}
	&|\sum_{j=1}^{N_S} P(\boldsymbol{x}_j)\boldsymbol{n}(\boldsymbol{x}_j) \cdot
	\boldsymbol{\phi}_h(\boldsymbol{x}_j) \Delta S(\boldsymbol{x}_j)| \\
	&\leqslant
	\sum_{j=1}^{N_S} |P(\boldsymbol{x}_j)| ||\boldsymbol{n}(\boldsymbol{x}_j)|| 
	||\boldsymbol{\phi}_h(\boldsymbol{x}_j)|| \Delta S(\boldsymbol{x}_j)  \\
	&=\sum_{j=1}^{N_S} |P(\boldsymbol{x}_j)| 
	||\boldsymbol{\phi}_h(\boldsymbol{x}_j)|| \Delta S(\boldsymbol{x}_j) \leqslant
	M_1 M_2 \sum_{j=1}^{N_S} \Delta S(\boldsymbol{x}_j).
	\end{align*}
	
	Note that $\boldsymbol{n}$ has unit length.
	The summation gives the polygonal length or polyhedral area of the cell boundary.
	Hence the right-hand side is bounded, then by Lax-Milgram's Lemma, existence 
	and uniqueness follow. Further by substitution, it follows that
	that
	\begin{align*}
	a(\boldsymbol{u}_h,\boldsymbol{\phi}_h) &= a(\sum_{j=1}^{N_S} P(\boldsymbol{x}_j)
	\boldsymbol{u}^G_h(\boldsymbol{x},\boldsymbol{x}_j,\boldsymbol{n}(\boldsymbol{x}_j))
	\Delta S(\boldsymbol{x}_j),\boldsymbol{\phi}_h)  \\
	&=\sum_{j=1}^{N_S} P(\boldsymbol{x}_j) 
	a(\boldsymbol{u}^G_h(\boldsymbol{x},\boldsymbol{x}_j,\boldsymbol{n}(\boldsymbol{x}_j)),
	\boldsymbol{\phi}_h) \Delta S(\boldsymbol{x}_j) \\
	&=\sum_{j=1}^{N_S}P(\boldsymbol{x}_j)\boldsymbol{n}(\boldsymbol{x}_j) \cdot
	\boldsymbol{\phi}_h(\boldsymbol{x}_j)\Delta S(\boldsymbol{x}_j).
	\end{align*}
	The last step uses the first part of the theorem, and finally the assertion is proved similarly to the first assertion.
	\item We proceed similarly, by boundedness of the right-hand side:
	$$
	|\int_{\Gamma_C} P(\boldsymbol{x}') \boldsymbol{n}(\boldsymbol{x}') \cdot 
	\boldsymbol{\phi_h}(\boldsymbol{x}') d S(\boldsymbol{x}')| \leqslant
	M_1 M_2 |\Gamma_C|,
	$$
	where $|\Gamma_C|$ is the measure of the boundary surface of the biological cell. It again shows that the right-hand side is a bounded linear functional in 
	$\boldsymbol{V}_h(\Omega)$. We proceed by substitution:
	
	\begin{align*}
	a(\boldsymbol{u}_h, \boldsymbol{\phi}_h) &= a(\int_{\Gamma_C} P(\boldsymbol{x}')
	\boldsymbol{u}^G_h(\boldsymbol{x},\boldsymbol{x}',\boldsymbol{n}(\boldsymbol{x}'))
	d S(\boldsymbol{x}'),\boldsymbol{\phi}_h) \\
	&=\int_{\Gamma_C} P(\boldsymbol{x}') 
	a(\boldsymbol{u}^G_h(\boldsymbol{x},\boldsymbol{x}',\boldsymbol{n}(\boldsymbol{x}')),
	\boldsymbol{\phi}_h) d S(\boldsymbol{x}')\\
	&=\int_{\Gamma_C} P(\boldsymbol{x}')\boldsymbol{n}(\boldsymbol{x}') \cdot 
	\boldsymbol{\phi}_h(\boldsymbol{x}') d S(\boldsymbol{x}').
	\end{align*}
	
\end{itemize}
Note that, formally, it was not necessary to prove boundedness, since coerciveness implies
uniqueness and the existence was proved by construction and by combining the
result for the existence of $\boldsymbol{u}_h^G$.
\end{proof}

Note that for the 'continuous' weak formulation, there is no solution in $\boldsymbol{H^1}$, hence the above claim demonstrates the existence and uniqueness of a Galerkin-based approximation in a subset of $\boldsymbol{H^1}$ to a function that is not in $\boldsymbol{H^1}$. The situation is somewhat comparable to approximating $\sqrt{2} \notin \mathbb{Q}$ arbitrarily accurately by a sequence of successive approximations in $\mathbb{Q}$.
Further in two- and three- dimensional case, the convergence between the solution to $(GF)$ and $(GF_\infty)$ can be proved. Similar work has been done in \cite{lacouture2015numerical} regarding Stokes problem with the Delta distribution term. 
\begin{theorem}
Let $\Gamma_C$ be a polygon or polyhedron embedded in $\Omega\subset\R^n$ and let $P(\boldsymbol{x})$ be sufficiently smooth. Further,
let $\boldsymbol{x}_j$ be the midpoint of surface element $\Delta S(\boldsymbol{x}_j)$.
Denote $\boldsymbol{u}_h^{\Delta S}$ as the Galerkin solution to $(GF)$ and the $\boldsymbol{u}_h^\infty$ as the  Galerkin solution to $(GF_{\infty})$, respectively. In two dimensions, for any $\boldsymbol{x}\notin\Gamma_C$, there exists a positive constant $C$, such that for each component of $\boldsymbol{u}_h^{\infty}$ we have $$|\boldsymbol{u}_h^{\Delta S}-\boldsymbol{u}_h^\infty|\leqslant C\Delta S_{max}^2,$$ where $\Delta S_{max}=\max\{\Delta S(\boldsymbol{x}_j)\}$ for any $j=\{1,2,\cdots, N_S\}$. In three dimensions, for any $\boldsymbol{x}\notin\Gamma_C$, there exists a positive constant $C$, such that for each component of $\boldsymbol{u}_h^{\infty}$ we have $$|\boldsymbol{u}_h^{\Delta S}-\boldsymbol{u}_h^\infty|\leqslant C h_{max}^2,$$ where $h_{max}$ is the maximal diameter among all the triangular elements over $\Gamma_C$.  	
\end{theorem}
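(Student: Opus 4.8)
The plan is to turn the difference $\boldsymbol{u}_h^{\Delta\Gamma}-\boldsymbol{u}_h^{\infty}$ into a quadrature error for one and the same discrete Green's field $\boldsymbol{u}_h^G$, and then to estimate that error surface element by surface element. By the second and third items of Theorem~\ref{Th_solexist},
\begin{equation*}
\boldsymbol{u}_h^{\Delta\Gamma}(\boldsymbol{x})=\sum_{j=1}^{N_S}P(\boldsymbol{x}_j)\,\boldsymbol{u}_h^G(\boldsymbol{x};\boldsymbol{x}_j;\boldsymbol{n}(\boldsymbol{x}_j))\,\Delta S(\boldsymbol{x}_j),\qquad \boldsymbol{u}_h^{\infty}(\boldsymbol{x})=\int_{\Gamma_c}P(\boldsymbol{x}')\,\boldsymbol{u}_h^G(\boldsymbol{x};\boldsymbol{x}';\boldsymbol{n}(\boldsymbol{x}'))\,dS(\boldsymbol{x}').
\end{equation*}
Fixing one Cartesian component and writing $g_{\boldsymbol{x}}(\boldsymbol{x}'):=P(\boldsymbol{x}')\,u_h^G(\boldsymbol{x};\boldsymbol{x}';\boldsymbol{n}(\boldsymbol{x}'))$ for that component, the quantity to be bounded becomes
\begin{equation*}
u_h^{\Delta\Gamma}(\boldsymbol{x})-u_h^{\infty}(\boldsymbol{x})=\sum_{j=1}^{N_S}\left(\int_{\Delta S(\boldsymbol{x}_j)}g_{\boldsymbol{x}}(\boldsymbol{x}')\,dS(\boldsymbol{x}')-\Delta S(\boldsymbol{x}_j)\,g_{\boldsymbol{x}}(\boldsymbol{x}_j)\right),
\end{equation*}
i.e.\ the total error of the midpoint rule in two dimensions (each $\Delta S(\boldsymbol{x}_j)$ a segment, $\boldsymbol{x}_j$ its midpoint) and of the one-point centroid rule in three dimensions (each $\Delta S(\boldsymbol{x}_j)$ a triangle, $\boldsymbol{x}_j$ its centroid).

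The second step is a regularity statement for $g_{\boldsymbol{x}}$ along $\Gamma_c$. For fixed $\boldsymbol{x}$ and fixed unit vector $\boldsymbol{F}$, the field $\boldsymbol{u}_h^G(\boldsymbol{x};\boldsymbol{x}';\boldsymbol{F})$ solves the finite linear system $a(\boldsymbol{u}_h^G,\boldsymbol{\phi}_h)=\boldsymbol{F}\cdot\boldsymbol{\phi}_h(\boldsymbol{x}')$, whose matrix is symmetric positive definite by Lemma~\ref{lemma_coerc}; since its right-hand side depends on $\boldsymbol{x}'$ only through the nodal values of the piecewise-affine Lagrangian basis functions, the map $\boldsymbol{x}'\mapsto\boldsymbol{u}_h^G(\boldsymbol{x};\boldsymbol{x}';\boldsymbol{F})$ is continuous in $\boldsymbol{x}'$ and affine on each element of the background mesh, with pointwise values and gradients bounded by a constant depending only on $\Omega$, $E$, $\nu$, the mesh and $\boldsymbol{x}$ --- but not on $\Delta S_{max}$ or $h_{max}$ (using Lax--Milgram and coercivity). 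Each surface element lies in a single face of $\Gamma_c$, on which $\boldsymbol{n}$ is constant; hence, on any surface element contained in a single background element --- call these \emph{regular} and the others \emph{exceptional} --- $g_{\boldsymbol{x}}$ is the product of a $C^2$ function ($P$) and an affine function, so $\|D^2 g_{\boldsymbol{x}}\|_{L^\infty}\le C$ there, while on an exceptional element $g_{\boldsymbol{x}}$ is merely Lipschitz with a uniformly bounded constant.

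The third step inserts the classical one-point quadrature bounds and sums. On a regular segment of length $\ell$ the midpoint-rule error is $\le C\ell^3\|D^2 g_{\boldsymbol{x}}\|_{\infty}$, and $\sum_j\ell_j^3\le\Delta S_{max}^2\sum_j\ell_j=\Delta S_{max}^2|\Gamma_c|$; on a regular triangle $T$ the centroid-rule error is $\le C h_T^2|T|\,\|D^2 g_{\boldsymbol{x}}\|_{\infty}$ --- the linear Taylor term integrating to zero precisely because $\boldsymbol{x}_j$ is the centroid --- and $\sum_T h_T^2|T|\le h_{max}^2|\Gamma_c|$. On exceptional elements only Lipschitz continuity is used, giving an error $O(\ell^2)$ on a segment and $O(h_T|T|)$ on a triangle. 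In two dimensions a fixed polygon edge meets the fixed background-mesh skeleton in finitely many points, so the number of exceptional segments is bounded independently of $\Delta S_{max}$ and their contribution is $O(\Delta S_{max}^2)$; in three dimensions the exceptional triangles lie in a strip of width $O(h_{max})$ about a curve of total length $O(1)$ (the intersection of $\Gamma_c$ with the background-mesh skeleton), so $\sum_{T\text{ exc}}h_T|T|\le h_{max}\sum_{T\text{ exc}}|T|=O(h_{max}^2)$. Adding the regular and exceptional parts gives $|u_h^{\Delta\Gamma}(\boldsymbol{x})-u_h^{\infty}(\boldsymbol{x})|\le C\Delta S_{max}^2$ in two dimensions and $\le C h_{max}^2$ in three; the hypothesis $\boldsymbol{x}\notin\Gamma_c$ serves only to keep $C$ finite, and could in fact be dropped since both $\boldsymbol{u}_h^{\Delta\Gamma}$ and $\boldsymbol{u}_h^{\infty}$ are genuine finite-element functions.

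The main obstacle I anticipate is exactly this exceptional-element bookkeeping: $\boldsymbol{u}_h^G(\boldsymbol{x};\cdot;\boldsymbol{F})$ has only a Lipschitz (not $C^1$) dependence on the source point across interfaces of the background mesh, and one must verify in each dimension that the number of affected surface elements, multiplied by their weaker per-element error, still stays within the claimed order --- which works because in two dimensions there are $O(1)$ of them, while in three dimensions there are more but each is smaller by a factor $h_{max}$. The remaining ingredients are routine: the uniform bounds on $\boldsymbol{u}_h^G$ follow from Lax--Milgram together with Lemma~\ref{lemma_coerc}, and the per-element estimates are the standard midpoint and centroid quadrature errors.
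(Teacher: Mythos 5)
Your proposal follows the same route as the paper's proof: both represent $\boldsymbol{u}_h^{\Delta\Gamma}$ and $\boldsymbol{u}_h^{\infty}$ through the discrete Green's field $\boldsymbol{u}_h^G$ of Theorem~\ref{Th_solexist}, recast the difference as the error of the midpoint rule (2D) or the one-point centroid rule (3D) applied to $g(\boldsymbol{x}')=P(\boldsymbol{x}')\,u_h^G(\boldsymbol{x};\boldsymbol{x}';\boldsymbol{n}(\boldsymbol{x}'))$, Taylor-expand about the element midpoint so that the linear term vanishes, bound the quadratic remainder by the Hessian, and sum over elements using $\sum_j \ell_j^3\le \Delta S_{max}^2|\Gamma_c|$ and $\sum_T h_T^2|T|\le h_{max}^2|\Gamma_c|$. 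Where you genuinely go further is on the regularity of $g$ in the \emph{source} variable $\boldsymbol{x}'$: the paper simply asserts $f\in C^2$, invoking smoothness of $\boldsymbol{u}_h^G$ ``away from $\Gamma_c$'', which concerns the field point $\boldsymbol{x}$, whereas the integration variable is $\boldsymbol{x}'\in\Gamma_c$ and the dependence on $\boldsymbol{x}'$ enters only through the piecewise-affine values $\boldsymbol{\phi}_h(\boldsymbol{x}')$, so $\boldsymbol{x}'\mapsto\boldsymbol{u}_h^G(\boldsymbol{x};\boldsymbol{x}';\boldsymbol{F})$ is merely Lipschitz across facets of the background mesh. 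Your split into regular and exceptional surface elements, with the counting argument showing the exceptional contribution is still $O(\Delta S_{max}^2)$ in 2D and $O(h_{max}^2)$ in 3D, closes a gap the paper leaves open; the price is the extra bookkeeping, and the assumption (which you should state) that the surface partition of $\Gamma_c$ respects its faces so that $\boldsymbol{n}$ is constant on each element. Your side remark that the hypothesis $\boldsymbol{x}\notin\Gamma_c$ is not actually needed at the discrete level is also correct.
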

\begin{proof}
Away from $\Gamma_C$, the function $\boldsymbol{u}_h^G$ is smooth, and since $P(\boldsymbol{x})$ is 
smooth as well, the integrand, given by $P(\boldsymbol{x})\boldsymbol{u}_h^G$ is smooth as well. 
For ease of notation, we set 
$\boldsymbol{f}(\boldsymbol{x})=P(\boldsymbol{x})\boldsymbol{u}_h^G(\boldsymbol{x};\boldsymbol{x'};\boldsymbol{n})$. We start with
the 2D-case.
Given the $i-th$ boundary element $\Delta S_i$ on $\Gamma_C$ with the endpoints $\boldsymbol{x}_i$ and $\boldsymbol{x}_{i+1}$ and we denote its midpoint by $\boldsymbol{x}_{i+1/2}$, where $i\in\{1,2,\cdots, N_S\}$. We consider $$\boldsymbol{x}(s)=\boldsymbol{x}_{i+1/2}+s\FS{\boldsymbol{x}_{i+1}-\boldsymbol{x}_i}{2},\quad-1\leqslant s\leqslant 1,$$
Hence, $\boldsymbol{x}(0)=\boldsymbol{x}_{i+1/2}$ and $\boldsymbol{x}'(s)=\FS{1}{2}(\boldsymbol{x}_{i+1}-\boldsymbol{x}_1),$
and subsequently $$\|\boldsymbol{x}'(s)\|=\FS{1}{2}\|\boldsymbol{x}_{i+1}-\boldsymbol{x}_1\|.$$  	
We calculate the contribution over $\Delta S_i$ to the integral, where Taylor's
Theorem and the Mean Value Theorem for integration are used to warrant the existence 
of a $\hat{s} \in (-1,1)$, such that
\begin{align*}
&\int_{\Delta S_i}\boldsymbol{f}(\boldsymbol{x})dS=\int_{-1}^{1}\boldsymbol{f}(\boldsymbol{x}(s))\|\boldsymbol{x}'(s)\|ds\\
&=\FS{1}{2}\|\boldsymbol{x}_{i+1}-\boldsymbol{x}_i\|\int_{-1}^{1}\boldsymbol{f}(\boldsymbol{x}(s))ds\\
&\text{(Taylor Expansion)}  =\FS{1}{2}\|\boldsymbol{x}_{i+1}-\boldsymbol{x}_i\|\int_{-1}^{1} \boldsymbol{f}(\boldsymbol{x}(0))+s\FS{\boldsymbol{x}_{i+1}-\boldsymbol{x}_i}{2}\nabla \boldsymbol{f}(\boldsymbol{x}(s))|_{s=0}\\
&+\FS{1}{2}s^2(\FS{\boldsymbol{x}_{i+1}-\boldsymbol{x}_i}{2})^T \boldsymbol{H}(\boldsymbol{x}(\hat{s})) (\FS{\boldsymbol{x}_{i+1}-\boldsymbol{x}_i}{2})ds\\
&=\FS{1}{2}\|\boldsymbol{x}_{i+1}-\boldsymbol{x}_i\|[2\boldsymbol{f}(\boldsymbol{x}_{i+1/2})+0+\FS{1}{12}(\boldsymbol{x}_{i+1}-\boldsymbol{x}_i)^T \boldsymbol{H}(\boldsymbol{x}(\hat{s})) (\boldsymbol{x}_{i+1}-\boldsymbol{x}_i)]\\
&=\|\boldsymbol{x}_{i+1}-\boldsymbol{x}_i\|\boldsymbol{f}(\boldsymbol{x}_{i+1/2})+\FS{1}{24}\|\boldsymbol{x}_{i+1}-\boldsymbol{x}_i\|(\boldsymbol{x}_{i+1}-\boldsymbol{x}_i)^T\boldsymbol{H}(\boldsymbol{x}(\hat{s})) (\boldsymbol{x}_{i+1}-\boldsymbol{x}_i),
\end{align*}
where $\boldsymbol{H}(\boldsymbol{x}(s))$ is the Hessian matrix of $f(\boldsymbol{x}(s))$. Therefore, we obtain that
\begin{align*}
&\left|\int_{\Delta S_i}\boldsymbol{f}(\boldsymbol{x})dS-\|\boldsymbol{x}_{i+1}-\boldsymbol{x}_1\|\boldsymbol{f}(\boldsymbol{x}_{i+1/2})\right|\\
&=\FS{1}{24}\|\boldsymbol{x}_{i+1}-\boldsymbol{x}_i\|\cdot|(\boldsymbol{x}_{i+1}-\boldsymbol{x}_i)^T\boldsymbol{H}(\boldsymbol{x}(\hat{s})) (\boldsymbol{x}_{i+1}-\boldsymbol{x}_i)|\\
&\leqslant \FS{1}{24}\|\boldsymbol{x}_{i+1}-\boldsymbol{x}_i\|\tilde{K}\|\boldsymbol{x}_{i+1}-\boldsymbol{x}_i\|^2.
\end{align*}
Since $\boldsymbol{f}(\boldsymbol{x})\in \boldsymbol{C^2}(\Omega)$, it follows that there exists a $\tilde{K}>0$, such that $$|(\boldsymbol{x},\boldsymbol{H}(\boldsymbol{x}))|\leqslant\tilde{K}\|\boldsymbol{x}\|^2.$$
Therefore, considering the summation of the boundary elements over $\partial\Omega_C$, 
\begin{align*}
&\left|\int_{\Delta S_i}\boldsymbol{f}(\boldsymbol{x})dS-\sum_{i=1}^{N_S}\|\boldsymbol{x}_{i+1}-\boldsymbol{x}_1\|\boldsymbol{f}(\boldsymbol{x}_{i+1/2})\right| \\
&\leqslant\sum_{i=1}^{N_S}\FS{1}{24}\|\boldsymbol{x}_{i+1}-\boldsymbol{x}_i\|\tilde{K}\|\boldsymbol{x}_{i+1}-\boldsymbol{x}_i\|^2\\
&\leqslant \FS{1}{24}\tilde{K}\Delta S_{max}^2\sum_{i=1}^{N_S}\|\boldsymbol{x}_{i+1}-\boldsymbol{x}_i\|\\
& \leqslant \FS{1}{24}\tilde{K}\Delta S_{max}^2|\Gamma_C|,
\end{align*}
where $\Delta S_{max} = \max_{i \in \{1,\ldots,N_S\}}|| \boldsymbol{x}_{i+1} - \boldsymbol{x}_i ||$ is the maximal length of the line segment over $\Gamma_C$, and $|\Gamma_C|$ is the perimeter of the polygon $\Gamma_C$. It can be concluded that there exists a positive constant $K$, such that $$|\boldsymbol{u}_h^\infty-\boldsymbol{u}_h^{\Delta S}|\leqslant K\Delta S_{max}^2.$$

In three dimensions, the surface element is a triangle. We map the triangle in $(x,y,z)$-space to the reference triangle in $(s,t)$-space with points $(0,0),(0,1)$ and $(1,0)$. Suppose there is a surface element $e_j$ with nodal points $\boldsymbol{x}_1,\boldsymbol{x_2}$ and $\boldsymbol{x_3}$, then the centre point of $e_j$ is $\boldsymbol{x}_c=(\boldsymbol{x}_1+\boldsymbol{x}_2+\boldsymbol{x}_3)/3$. The map from the 
reference triangle $e_0$ to the physical triangle $e_j$ is given by $$\boldsymbol{x}(s,t)=\boldsymbol{x}_1(1-s-t)+s\boldsymbol{x}_2+t\boldsymbol{x}_3, \quad0\leqslant s,t\leqslant 1.$$ 
For any function $\boldsymbol{f}(\boldsymbol{x})\in C^2(\Omega)$, the integral over the original triangle is given by $$\int_{e_j}\boldsymbol{f}(\boldsymbol{x})d\boldsymbol{x}=\int_{e_0}\boldsymbol{f}(\boldsymbol{x}(s,t))|\sqrt{\det(\boldsymbol{J}^T\boldsymbol{J})|}d(s,t),$$ where $\boldsymbol{J}$ is the Jacobian matrix, given by
$$
\boldsymbol{J} = \frac{\partial (x,y,z)}{\partial (s,t)} = 
\begin{pmatrix}
x_2 - x_1 & x_3 - x_1 \\
y_2 - y_1 & y_3 - y_1 \\
z_2 - z_1 & z_3 - z_1
\end{pmatrix},
$$
and $\sqrt{|\det(\boldsymbol{J}^T \boldsymbol{J})|}$ is twice the area of the original triangle $e_j$, i.e.
\begin{equation*}
|\Delta_j|:= \sqrt{|\det(\boldsymbol{J}^T \boldsymbol{J})|} = 
||(\boldsymbol{x}_2 - \boldsymbol{x}_1) \times (\boldsymbol{x}_3 - \boldsymbol{x}_1) ||.
\end{equation*} 
We conduct the same process as for the two dimensional case, we obtain, where 
$\boldsymbol{x}(\frac{1}{3},\frac{1}{3}) = \boldsymbol{x}_c$ coincides with the midpoint 
of element $e_j$, and where Taylor's Theorem for 
multi-variate functions is used:
\begin{align*}
&\int_{e_j}\boldsymbol{f}(\boldsymbol{x})d\boldsymbol{x}=\int_{e_0}\boldsymbol{f}(\boldsymbol{x}(s,t))|\Delta_j|d(s,t)\\
&=|\Delta_j|\int_{e_0}\boldsymbol{f}(\boldsymbol{x}(s,t))d(s,t) \\
&=|\Delta_j|\int_{e_0}\boldsymbol{f}(\boldsymbol{x}_c)+(\boldsymbol{x}(s,t)-\boldsymbol{x}_c)\cdot\nabla \boldsymbol{f}(\boldsymbol{x}_c)|\\
&+\frac{1}{2}(\boldsymbol{x}(s,t)-\boldsymbol{x}_c)^T \boldsymbol{H}(\boldsymbol{x}(\hat{s},\hat{t}))(\boldsymbol{x}(s,t)-\boldsymbol{x}_c)d(s,t)\\
&=|\Delta_j|[\frac{1}{2}\boldsymbol{f}(\boldsymbol{x}_c)+0+\frac{1}{2}\int_{e_0}(\boldsymbol{x}(s,t)-\boldsymbol{x}_c)^T \boldsymbol{H}(\boldsymbol{x}(\hat{s},\hat{t}))(\boldsymbol{x}(s,t)-\boldsymbol{x}_c)
d(s,t)].
\end{align*}
Due to $\boldsymbol{f}(\boldsymbol{x})\in \boldsymbol{C^2}(\Omega)$, then for the Hessian matrix of $\boldsymbol{f}(\boldsymbol{x})$, there exists $\tilde{K}>0$, such that $$|(\boldsymbol{x},\boldsymbol{H}(\boldsymbol{x}))|\leqslant\tilde{K}\|\boldsymbol{x}\|^2.$$ It yields
\begin{align*}
&\left|\int_{e_j}\boldsymbol{f}(\boldsymbol{x})d\boldsymbol{x}-\frac{|\Delta_j|}{2}\boldsymbol{f}(\boldsymbol{x}_c)\right|\\
&\leqslant\left|\frac{|\Delta_j|}{2}\int_{e_0}(\boldsymbol{x}(s,t)-\boldsymbol{x}_c)^T \boldsymbol{H}(\boldsymbol{x}(\hat{s},\hat{t}))(\boldsymbol{x}(s,t)-\boldsymbol{x}_c)d(s,t)\right| \\
& \leqslant \frac{|\Delta_j|}{4}\tilde{K} h_{max}^2,
\end{align*}
where $h_{max}^2$ is the largest diameter in the original triangle $e_j$. Considering all the surface elements over $\Gamma_C$, we compute
\begin{align*}
\left|\int_{\Gamma_C}\boldsymbol{f}(\boldsymbol{x})d\boldsymbol{x}-\sum_{j=1}^{N_S}\frac{|\Delta_j|}{2}\boldsymbol{f}(\boldsymbol{x}_j)\right|&\leqslant \frac{\tilde{K}}{4} h_{max}^2\sum_{j=1}^{N_S}\frac{|\Delta_j|}{2}
\leqslant \frac{\tilde{K}}{4} h_{max}^2|\Gamma_C|,
\end{align*}
where $h_{max}^2$ is the maximal diameter among all the surface element (i.e. triangle) and $|\Gamma_C|$ is the sum of the measure (area in $\R^3$) of all the surface elements over $\Gamma_C$. Therefore, in three dimensions, we can conclude that there exists a positive constant $K$, such that for the unique Galerkin solution to both $(GF)$ and $(GF_\infty)$,  
$$|\boldsymbol{u}_h^\infty-\boldsymbol{u}_h^{\Delta S}|\leqslant Kh_{max}^2.$$
\end{proof}

The above proof and theorem can easily be extended to
higher dimensionalities.

\section{Alternative Approaches for Elasticity Equation with Point Sources in Multi Dimensions}
\label{Alternative}
\subsection{The 'Hole' Approach}
\noindent
A different approach is based on considering cellular forces
on the cell boundary by means of a boundary condition. In 
this alternative approach, one 'removes' the cell region
from the domain of computation. Herewith, one creates a 'hole'
in the domain. We consider the balance of momentum
over $\Omega \setminus \overline{\Omega}_C$. This gives
the following boundary value problem:
\begin{equation*}
\label{Eq_elas_hole}
(BVP_H)\left\{
\begin{aligned}
-\nabla\cdot\boldsymbol{\sigma}&=0,\qquad&\mbox{in $\Omega\backslash\overline{\Omega}_C$,}\\
\boldsymbol{\sigma}\cdot\boldsymbol{n}&=P(\boldsymbol{x})\boldsymbol{n}(\boldsymbol{x}),\qquad&\mbox{on $\partial\Omega_C$,}\\
\boldsymbol{u}&=\boldsymbol{0},\qquad&\mbox{on $\partial\Omega$,}
\end{aligned}
\right.
\end{equation*}
where $\boldsymbol{\sigma}$ is defined in Eq (\ref{Eq_sigma}) with stiffness $E$. Let $D \subset \Omega$, then we introduce the following
notation:
$$
a_{D,E}(\boldsymbol{u},\boldsymbol{v}) := \int_D 
\boldsymbol{\sigma}(\boldsymbol{u}) : 
\boldsymbol{\epsilon}(\boldsymbol{v}) d\Omega.
$$
Note that the stiffness can be a constant or a function of space over the domain $D$. 

The corresponding weak form is stated below:
\[
(WF_H) \left \{
\begin{aligned}
&\text{Find $\boldsymbol{u}^{H} \in \boldsymbol{H^1}(\Omega \setminus \Omega_C)$
such that} \\
&a_{\Omega \setminus \Omega_C,E}
(\boldsymbol{u}^{H},\boldsymbol{\phi}) =
\int_{\Gamma_C} P(\boldsymbol{x}) \boldsymbol{n}(\boldsymbol{x}) \cdot \boldsymbol{\phi} d S(\boldsymbol{x}), \text{for all $\boldsymbol{\phi}\in \boldsymbol{H^1}(\Omega \setminus \Omega_C)$.}
\end{aligned}
\right.\]
Since $\boldsymbol{\phi} \in \boldsymbol{H^1}(\Omega \setminus \Omega_C)$, it follows
from the Trace Theorem \citep{braess2007finite}, and by noting that $\boldsymbol{\phi}|_{\partial \Omega} = 0$, that there is a
$C_1 > 0$ such that $||\boldsymbol{\phi}||_{\boldsymbol{L^2}(\Gamma_C)} \le C_1
||\boldsymbol{\phi}||_{\boldsymbol{H^1}(\Omega)}$, which implies that the right-hand
side in the weak form is bounded. Subsequently one 
combines Korn's Inequality with Lax-Milgram's Lemma
to conclude that a unique solution in $\boldsymbol{H^1}$ exists.

We compare the immersed boundary method with the 'hole' approach by taking $\beta \geqslant 0$, then we adjust the immersed boundary method such that
\begin{equation}
\label{Eq_stiffness}
E(\boldsymbol{x}) =
\begin{cases}
\beta E, & \text{in } \Omega_C, \\
E, & \text{\color{red} in  $\Omega \setminus \overline{\Omega}_C$.}
\end{cases}
\end{equation}

Regarding the adjusted immersed boundary approach where the stiffness is given by Eq (\ref{Eq_stiffness}), we
have the following Galerkin form
\[
(GF_{\beta}) \left \{
\begin{aligned}
&\text{Find $\boldsymbol{u}_h^{\beta} \in \boldsymbol{V}_h(\Omega)$
such that for all $\boldsymbol{\phi}_h \in  \boldsymbol{V}_h(\Omega)$, we have} \\
&\beta a_{\Omega_C,E}(\boldsymbol{u}_h^{\beta},
\boldsymbol{\phi}_h) + 
a_{\Omega \setminus \Omega_C,E}
(\boldsymbol{u}_h^{\beta},\boldsymbol{\phi}_h)=\int_{\Gamma_C} P(\boldsymbol{x}) \boldsymbol{n}(\boldsymbol{x}) \cdot \boldsymbol{\phi}_h(\boldsymbol{x}) d S(\boldsymbol{x}),
\end{aligned}
\right.\]
where $\boldsymbol{V}_h(\Omega)$ is defined in Theorem \ref{Th_solexist} in Section \ref{Elas_multi}.

For the 'hole' approach, we have the following Galerkin
form
\[
(GF_H) \left \{
\begin{aligned}
&\text{Find $\boldsymbol{u}_h^{H} \in  \boldsymbol{V}_h(\Omega \setminus \Omega_C)$
such that for all $\boldsymbol{\phi}_h \in  \boldsymbol{V}_h(\Omega \setminus \Omega_C)$
we have} \\ 
&a_{\Omega \setminus \Omega_C,E}
(\boldsymbol{u}_h^{H},\boldsymbol{\phi}_h) =
\int_{\Gamma_C} P(\boldsymbol{x}) \boldsymbol{n}(\boldsymbol{x}) \cdot \boldsymbol{\phi}_h d S(\boldsymbol{x}).
\end{aligned}
\right.\]

We will prove that the adjusted immersed boundary 
method is a perturbation of the 'hole' approach:
\begin{proposition}\label{Prop_hole_consis}
Let $\boldsymbol{u}_h^H$ and $\boldsymbol{u}_h^{\beta}$,
respectively, satisfy
Galerkin forms $(GF_H)$ and $(GF_{\beta})$, then there
is a $C > 0$ such that
$|| \boldsymbol{u}_h^H - \boldsymbol{u}_h^{\beta} ||_{\boldsymbol{H^1}(\Omega \setminus \Omega_C)} \leqslant C \sqrt{\beta}\|\boldsymbol{u}_h^\beta\|^{1/2}_{\boldsymbol{H^1}(\Omega_C)}$.
\end{proposition}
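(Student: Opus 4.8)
The plan is to derive an \emph{error identity} by testing the two Galerkin problems with a single field that agrees with $\boldsymbol{u}_h^\beta-\boldsymbol{u}_h^H$ on $\Omega\setminus\Omega_C$, so that the identical boundary loads cancel on subtraction and one is left with a residual term carrying the factor $\beta$. Throughout I assume that the triangulation is fitted to the polygon/polyhedron $\Gamma_c$, so that $\boldsymbol{V}_h(\Omega\setminus\Omega_C)$ consists exactly of the restrictions to $\Omega\setminus\Omega_C$ of the functions in $\boldsymbol{V}_h(\Omega)$, and I fix a mesh-stable extension operator $\mathcal{E}_h:\boldsymbol{V}_h(\Omega\setminus\Omega_C)\to\boldsymbol{V}_h(\Omega)$ (for instance of Scott--Zhang type) with $(\mathcal{E}_h\boldsymbol{v})|_{\Omega\setminus\Omega_C}=\boldsymbol{v}$ and $\|\mathcal{E}_h\boldsymbol{v}\|_{\boldsymbol{H}^1(\Omega)}\leqslant C\,\|\boldsymbol{v}\|_{\boldsymbol{H}^1(\Omega\setminus\Omega_C)}$, with $C$ independent of $h$ and $\beta$.

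First I would record a priori bounds that are uniform in $\beta$. Testing $(GF_H)$ with $\boldsymbol{u}_h^H$ and $(GF_{\beta})$ with $\boldsymbol{u}_h^\beta$, and using the coercivity of $a_{\Omega\setminus\Omega_C,E}$ on $\boldsymbol{H}^1(\Omega\setminus\Omega_C)$ (Korn's second inequality in the version where the homogeneous Dirichlet condition is imposed only on the positive-measure part $\partial\Omega$, exactly as in Lemma~\ref{lemma_coerc} and the remark following it), the Trace Theorem to bound $\int_{\Gamma_c}P\boldsymbol{n}\cdot\boldsymbol{\phi}_h\,dS$ by $C\|\boldsymbol{\phi}_h\|_{\boldsymbol{H}^1(\Omega\setminus\Omega_C)}$, and the bound $|P|\leqslant M_2$, yields $\|\boldsymbol{u}_h^H\|_{\boldsymbol{H}^1(\Omega\setminus\Omega_C)}\leqslant C$ and $\|\boldsymbol{u}_h^\beta\|_{\boldsymbol{H}^1(\Omega\setminus\Omega_C)}\leqslant C$. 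I also need control of $\boldsymbol{u}_h^\beta$ inside the cell: choosing in $(GF_{\beta})$ test functions $\boldsymbol{\phi}_h\in\boldsymbol{V}_h(\Omega)$ that vanish at all nodes of $\overline{\Omega\setminus\Omega_C}$ gives $a_{\Omega_C,E}(\boldsymbol{u}_h^\beta,\boldsymbol{\phi}_h)=0$, so $\boldsymbol{u}_h^\beta|_{\Omega_C}$ is the discrete minimal-energy elastic extension of its own trace on $\partial\Omega_C$; comparing its energy with that of $\mathcal{E}_h(\boldsymbol{u}_h^\beta|_{\Omega\setminus\Omega_C})|_{\Omega_C}$ and invoking the uniform bound just obtained gives $a_{\Omega_C,E}(\boldsymbol{u}_h^\beta,\boldsymbol{u}_h^\beta)^{1/2}\leqslant C$, uniformly in $\beta$.

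Next I would set $\tilde{\boldsymbol{u}}_h^H:=\mathcal{E}_h\boldsymbol{u}_h^H$ and $\boldsymbol{e}_h:=\boldsymbol{u}_h^\beta-\boldsymbol{u}_h^H$ on $\Omega\setminus\Omega_C$, so that $\boldsymbol{e}_h=(\boldsymbol{u}_h^\beta-\tilde{\boldsymbol{u}}_h^H)|_{\Omega\setminus\Omega_C}$ and both fields carry the same trace $\boldsymbol{e}_h|_{\Gamma_c}$ on $\Gamma_c$. Testing $(GF_{\beta})$ with $\boldsymbol{\phi}_h=\boldsymbol{u}_h^\beta-\tilde{\boldsymbol{u}}_h^H\in\boldsymbol{V}_h(\Omega)$ and $(GF_H)$ with $\boldsymbol{e}_h\in\boldsymbol{V}_h(\Omega\setminus\Omega_C)$, the right-hand sides are both $\int_{\Gamma_c}P\boldsymbol{n}\cdot\boldsymbol{e}_h\,dS$ and cancel upon subtraction, leaving
\[
a_{\Omega\setminus\Omega_C,E}(\boldsymbol{e}_h,\boldsymbol{e}_h)=-\beta\,a_{\Omega_C,E}\!\left(\boldsymbol{u}_h^\beta,\ \boldsymbol{u}_h^\beta-\tilde{\boldsymbol{u}}_h^H\right).
\]
Bounding the left-hand side below by $K\,\|\boldsymbol{e}_h\|_{\boldsymbol{H}^1(\Omega\setminus\Omega_C)}^2$ (coercivity on the hole domain, using $\boldsymbol{e}_h|_{\partial\Omega}=\boldsymbol{0}$) and the right-hand side above by the Cauchy--Schwarz inequality in the semi-inner product $a_{\Omega_C,E}$, together with $a_{\Omega_C,E}(\boldsymbol{u}_h^\beta,\boldsymbol{u}_h^\beta)^{1/2}\leqslant C$ and $a_{\Omega_C,E}(\tilde{\boldsymbol{u}}_h^H,\tilde{\boldsymbol{u}}_h^H)^{1/2}\leqslant C\|\boldsymbol{u}_h^H\|_{\boldsymbol{H}^1(\Omega\setminus\Omega_C)}\leqslant C$, gives $K\,\|\boldsymbol{e}_h\|_{\boldsymbol{H}^1(\Omega\setminus\Omega_C)}^2\leqslant C\beta$, hence $\|\boldsymbol{u}_h^\beta-\boldsymbol{u}_h^H\|_{\boldsymbol{H}^1(\Omega\setminus\Omega_C)}\leqslant C\sqrt{\beta}$. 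To obtain the norm over all of $\Omega$ as stated, one interprets $\boldsymbol{u}_h^H$ on $\Omega_C$ as its discrete elastic extension; then $\boldsymbol{u}_h^\beta-\tilde{\boldsymbol{u}}_h^H$ restricted to $\Omega_C$ is the discrete elastic extension of $\boldsymbol{e}_h|_{\Gamma_c}$, whose $\boldsymbol{H}^1(\Omega_C)$-norm is controlled through the Trace Theorem by $C\,\|\boldsymbol{e}_h\|_{\boldsymbol{H}^1(\Omega\setminus\Omega_C)}$, so the estimate $C\sqrt{\beta}$ transfers to $\|\cdot\|_{\boldsymbol{H}^1(\Omega)}$.

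The conceptual skeleton — error identity, cancellation of the loads, coercivity — is routine; the genuine difficulty is to keep every constant independent of both $\beta$ and the mesh size. In particular, the plain a priori estimate only delivers $\sqrt{\beta}\,a_{\Omega_C,E}(\boldsymbol{u}_h^\beta,\boldsymbol{u}_h^\beta)^{1/2}\leqslant C$, which would degrade the rate to $\beta^{1/4}$; recovering the stated $\sqrt{\beta}$ hinges on the observation that $\boldsymbol{u}_h^\beta|_{\Omega_C}$ is itself a discrete elastic extension of boundary data that are bounded uniformly in $\beta$, combined with a mesh-independent stable discrete extension operator. A secondary technical point, needed to make the identification $\boldsymbol{V}_h(\Omega\setminus\Omega_C)=\{\boldsymbol{v}|_{\Omega\setminus\Omega_C}:\boldsymbol{v}\in\boldsymbol{V}_h(\Omega)\}$ legitimate, is that the mesh must resolve the cell boundary $\Gamma_c$.
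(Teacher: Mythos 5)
Your proposal is correct and follows the same skeleton as the paper's proof: subtract the two Galerkin forms so that the identical boundary loads cancel, obtain the error identity $a_{\Omega \setminus \Omega_C,E}(\boldsymbol{v},\boldsymbol{\phi}_h) = -\beta\, a_{\Omega_C,E}(\boldsymbol{u}_h^{\beta},\boldsymbol{\phi}_h)$, and conclude via coercivity (Korn) with the test function equal to the error. Where you genuinely go beyond the paper is in the treatment of the right-hand side. The paper simply asserts, ``by the Cauchy--Schwartz Inequality,'' that there is an $L>0$ with $a_{\Omega_C,E}(\boldsymbol{u}_h^{\beta},\boldsymbol{\phi}_h) \leqslant L$, and never addresses whether $L$ is independent of $\beta$; the obvious energy estimate from testing $(GF_{\beta})$ with $\boldsymbol{u}_h^{\beta}$ only yields $\beta\, a_{\Omega_C,E}(\boldsymbol{u}_h^{\beta},\boldsymbol{u}_h^{\beta}) \leqslant C$, i.e.\ an $\mathcal{O}(\beta^{-1/2})$ bound on $a_{\Omega_C,E}(\boldsymbol{u}_h^{\beta},\boldsymbol{u}_h^{\beta})^{1/2}$, exactly the deficiency you flag. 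Your observation that $\boldsymbol{u}_h^{\beta}|_{\Omega_C}$ is discrete-elastic with respect to its own trace on $\Gamma_c$ (test with $\boldsymbol{\phi}_h$ vanishing at all nodes of $\overline{\Omega\setminus\Omega_C}$), hence energy-minimal among discrete fields with that trace and therefore bounded uniformly in $\beta$ via a stable discrete extension, is precisely the missing justification for the paper's constant $L$. You also make explicit two points the paper leaves silent: the mesh must resolve $\Gamma_c$ so that $V_h(\Omega\setminus\Omega_C)$ consists of restrictions of $V_h(\Omega)$, and the stated $H^1(\Omega)$-norm requires an interpretation of $\boldsymbol{u}_h^H$ inside $\Omega_C$.

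One small remark on your closing paragraph: the refined uniform bound, while the cleanest route, is not strictly indispensable for the $\sqrt{\beta}$ rate. If one tests the error identity with $\mathcal{E}_h\boldsymbol{e}_h$ and estimates
\begin{equation*}
\bigl|\beta\, a_{\Omega_C,E}(\boldsymbol{u}_h^{\beta},\mathcal{E}_h\boldsymbol{e}_h)\bigr| \leqslant \beta\, a_{\Omega_C,E}(\boldsymbol{u}_h^{\beta},\boldsymbol{u}_h^{\beta})^{1/2}\, a_{\Omega_C,E}(\mathcal{E}_h\boldsymbol{e}_h,\mathcal{E}_h\boldsymbol{e}_h)^{1/2} \leqslant \beta \cdot \frac{C}{\sqrt{\beta}} \cdot C\,\|\boldsymbol{e}_h\|_{\boldsymbol{H}^1(\Omega\setminus\Omega_C)},
\end{equation*}
then the crude $\mathcal{O}(\beta^{-1/2})$ a priori estimate already gives $K\|\boldsymbol{e}_h\|^2 \leqslant C\sqrt{\beta}\,\|\boldsymbol{e}_h\|$ and hence the same rate after one division; the degradation to $\beta^{1/4}$ occurs only if one insists, as the paper implicitly does, on a test-function-independent constant on the right. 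Your argument is nonetheless preferable, since it validates the inequality the paper actually writes down and yields the uniform interior energy bound as a by-product.
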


\begin{proof}
First we note that, as in the spirit of Theorem \ref{Th_solexist}, we consider Galerkin solutions in a subset of $\boldsymbol{H^1}$ whereas the solution to the 'continuous' weak formulation is not in $\boldsymbol{H^1}$. Formally $(GF_H)$ and $(GF_\beta)$ hold for test functions $\boldsymbol{\phi}_h$ from different sets, namely $\boldsymbol{V}_h(\Omega)$ and $\boldsymbol{V}_h(\Omega \setminus \Omega_C)$. If we choose $\boldsymbol{V}_h(\Omega_C)$ to correspond to Lagrangian basis functions associated to internal nodes in $\Omega_C$, then these basis functions vanish at $\Gamma_C$. Furthermore, within the set of Lagrangian basis functions that are associated with $\Omega \setminus \Omega_C$, there are Lagrangian basis functions associated with $\Gamma_C$, which have a compact, hence limited, support over $\Omega_C$ and in $\Omega \setminus \Omega_C$, then let $\boldsymbol{v} = \boldsymbol{u}_h^{\beta} - 
\boldsymbol{u}_h^H$, then subtraction of problems
$(GF_H)$ and $(GF_{\beta})$ gives
$$
a_{\Omega \setminus \Omega_C,E}(\boldsymbol{v},\boldsymbol{\phi}_h) = 
-\beta a_{\Omega_C,E}(\boldsymbol{u}_h^{\beta},\boldsymbol{\phi}_h).
$$
The left-hand side is a bounded and coercive form on
which we can apply Korn's Inequality. Furthermore,
boundedness of the right-hand side in 
$\boldsymbol{V}_h(\Omega \setminus \Omega_C)$ follows by
application of the Cauchy-Schwartz Inequality, hence
there is an $L > 0$ such that 
$|a_{\Omega_C,E}(\boldsymbol{u}_h^{\beta}, \boldsymbol{\phi}_h)| \leqslant L\|\boldsymbol{u}_h^\beta\|_{\boldsymbol{H^1}(\Omega_C)} \|\boldsymbol{\phi}_h\|_{\boldsymbol{H^1}(\Omega_C)}$. Herewith, we arrive at
$$
-\beta L \|\boldsymbol{u}_h^\beta\|_{\boldsymbol{H^1}(\Omega_C)}\|\boldsymbol{\phi}_h\|_{\boldsymbol{H^1}(\Omega_C)}\leqslant 
a_{\Omega \setminus \Omega_C,E}(\boldsymbol{v},\boldsymbol{\phi}_h)
\leqslant \beta L \|\boldsymbol{u}_h^\beta\|_{\boldsymbol{H^1}(\Omega_C)}\|\boldsymbol{\phi}_h\|_{\boldsymbol{H^1}(\Omega_C)}, \text{ for all } \boldsymbol{\phi}_h
\in \boldsymbol{V}_h(\Omega \setminus \Omega_C).
$$
Note that the $a_{\Omega \setminus \Omega_C}(\boldsymbol{v}, \boldsymbol{\phi}_h)$ contains $\boldsymbol{v}$ and $\boldsymbol{\phi}_h$ in $\Omega \setminus \Omega_C$, whereas the right-hand side of the inequality contains norms over $\Omega_C$. Using Korn's Inequality, and upon setting $\boldsymbol{\phi}_h = \boldsymbol{v}$ in $\Omega \setminus \Omega_C$, we arrive at 
\begin{align*}
&K || \boldsymbol{v} ||^2_{\boldsymbol{H^1}({\Omega \setminus \Omega_C})} \leqslant a_{\Omega \setminus \Omega_C,E}(\boldsymbol{v},\boldsymbol{v}) \leqslant
\beta L \|\boldsymbol{u}_h^\beta\|_{\boldsymbol{H^1}(\Omega_C)}\|\boldsymbol{\phi}_h\|_{\boldsymbol{H^1}(\Omega_C)}\\
&\Rightarrow || \boldsymbol{v} ||_{\boldsymbol{H^1}({\Omega \setminus \Omega_C})} \leqslant C \sqrt{\beta}\|\boldsymbol{u}_h^\beta\|^{1/2}_{\boldsymbol{H^1}(\Omega_C)},
\text{ where } C = \sqrt{\frac{L}{K}}\|\boldsymbol{\phi}_h\|^{1/2}_{\boldsymbol{H^1}(\Omega_C)}.
\end{align*}
\end{proof}

For the case of a spring-force boundary condition on $\partial \Omega$ one can derive
a compatibility condition. To this extent, we consider the following boundary value
problems, for the 'hole' problem:
\begin{equation*}
\label{Eq_elas_hole_prime}
(BVP_H')\left\{
\begin{aligned}
-\nabla\cdot\boldsymbol{\sigma}&=0,\qquad&\mbox{in $\Omega\backslash\overline{\Omega}_C$,}\\
\boldsymbol{\sigma}\cdot\boldsymbol{n}&=P(\boldsymbol{x})\boldsymbol{n}(\boldsymbol{x}),\qquad&\mbox{on $\partial\Omega_C$,}\\
\boldsymbol{\sigma}\cdot\boldsymbol{n}+\kappa\boldsymbol{u}&=\boldsymbol{0},\qquad&\mbox{on $\partial\Omega$,}
\end{aligned}
\right.
\end{equation*}
and for the immersed boundary problem:
\begin{equation*}
\label{Eq_elas_immerse_prime}
(BVP_I')\left\{
\begin{aligned}
-\nabla\cdot\boldsymbol{\sigma}&= \int_{\Gamma_C} P(\boldsymbol{x}') \boldsymbol{n}(\boldsymbol{x}') \delta(\boldsymbol{x} - \boldsymbol{x}') d S(\boldsymbol{x}),\qquad&\mbox{in $\Omega$,}\\
\boldsymbol{\sigma}\cdot\boldsymbol{n}+\kappa\boldsymbol{u}&=\boldsymbol{0},\qquad&\mbox{on $\partial\Omega$,}
\end{aligned}
\right.
\end{equation*}
Next we give a proposition regarding compatibility for the 'hole' approach and the
immersed boundary method for the case of a spring boundary condition:
\begin{proposition}
Let $\boldsymbol{u}_H$ and $\boldsymbol{u}_I$, respectively, be solutions to the 'hole' approach, see $(BVP_H')$ and to the immersed boundary approach, see $(BVP_I')$. Let $\Gamma_C$ denote the boundary of the cell, over which internal forces are exerted, and let $\partial \Omega$ be the outer boundary of $\Omega$. Then $$\int_{\partial \Omega} \kappa \boldsymbol{u}_H dS = \int_{\partial \Omega} \kappa \boldsymbol{u}_I d S = \int_{\Gamma_C} P(\boldsymbol{x}) \boldsymbol{n}(\boldsymbol{x}) d S.$$
\end{proposition}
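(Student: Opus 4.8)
The plan is to read both identities as statements of \emph{global force balance} for the two boundary value problems, and to obtain them by testing the associated weak formulations against spatially constant vector fields. The key observation is that a constant field $\boldsymbol{e}_k$ (the $k$-th Cartesian unit vector) belongs to $\boldsymbol{H}^1(\Omega)$ and to $\boldsymbol{H}^1(\Omega\setminus\Omega_C)$, and — crucially — it is an admissible test function here precisely because in $(BVP_H')$ and $(BVP_I')$ the outer boundary $\partial\Omega$ carries the Robin (spring) condition $\boldsymbol{\sigma}\cdot\boldsymbol{n}+\kappa\boldsymbol{u}=\boldsymbol{0}$ rather than a homogeneous Dirichlet condition; in the Dirichlet setting of the earlier sections this trick would be unavailable.

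First I would derive the weak form of $(BVP_H')$: testing $-\nabla\cdot\boldsymbol{\sigma}(\boldsymbol{u}_H)=\boldsymbol{0}$ with $\boldsymbol{\phi}\in\boldsymbol{H}^1(\Omega\setminus\Omega_C)$, integrating over $\Omega\setminus\Omega_C$, and applying the divergence theorem together with the two boundary conditions $\boldsymbol{\sigma}\cdot\boldsymbol{n}=P\boldsymbol{n}$ on $\partial\Omega_C$ and $\boldsymbol{\sigma}\cdot\boldsymbol{n}=-\kappa\boldsymbol{u}_H$ on $\partial\Omega$ gives
$$
a_{\Omega\setminus\Omega_C,E}(\boldsymbol{u}_H,\boldsymbol{\phi})+\int_{\partial\Omega}\kappa\,\boldsymbol{u}_H\cdot\boldsymbol{\phi}\,d\Gamma=\int_{\partial\Omega_C}P(\boldsymbol{x})\,\boldsymbol{n}(\boldsymbol{x})\cdot\boldsymbol{\phi}\,d\Gamma .
$$
Choosing $\boldsymbol{\phi}=\boldsymbol{e}_k$ annihilates the bilinear term, since $\boldsymbol{\epsilon}(\boldsymbol{e}_k)=\boldsymbol{0}$, leaving $\int_{\partial\Omega}\kappa\,(\boldsymbol{u}_H)_k\,d\Gamma=\int_{\partial\Omega_C}P\,n_k\,d\Gamma$; letting $k$ range over $1,\dots,n$ yields the first equality.

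For $(BVP_I')$ I would proceed identically over $\Omega$: the source term collapses, via the sifting property of the Dirac distribution, to $\int_{\Gamma_c}P(\boldsymbol{x}')\boldsymbol{n}(\boldsymbol{x}')\cdot\boldsymbol{\phi}(\boldsymbol{x}')\,dS(\boldsymbol{x}')$, and the divergence theorem plus the Robin condition on $\partial\Omega$ give $\int_{\Omega}\boldsymbol{\sigma}(\boldsymbol{u}_I):\boldsymbol{\epsilon}(\boldsymbol{\phi})\,d\Omega+\int_{\partial\Omega}\kappa\,\boldsymbol{u}_I\cdot\boldsymbol{\phi}\,d\Gamma=\int_{\Gamma_c}P\boldsymbol{n}\cdot\boldsymbol{\phi}\,dS$. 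Taking $\boldsymbol{\phi}=\boldsymbol{e}_k$ and recalling $\Gamma_c=\partial\Omega_C$ produces $\int_{\partial\Omega}\kappa\,\boldsymbol{u}_I\,d\Gamma=\int_{\partial\Omega_C}P\boldsymbol{n}\,d\Gamma$, which is the second equality; together the two statements close the chain of identities. One should also track orientations of $\boldsymbol{n}$ on $\partial\Omega_C$ consistently with the inward-normal convention used to define the cellular forces, so that the signs in $(BVP_H')$ and $(BVP_I')$ line up — this is bookkeeping, not a genuine difficulty.

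The step I expect to need the most care is the integration by parts in the immersed-boundary case, because $\boldsymbol{u}_I$ is \emph{not} in $\boldsymbol{H}^1$ near $\Gamma_c$ — this is the central difficulty of the whole paper — so the test function $\boldsymbol{e}_k$, not being compactly supported, cannot simply be inserted into the distributional identity. To make the manipulation rigorous I would excise a tubular $\varepsilon$-neighbourhood $T_\varepsilon$ of $\Gamma_c$, apply the divergence theorem on $\Omega\setminus T_\varepsilon$, where $\boldsymbol{\sigma}(\boldsymbol{u}_I)$ is smooth and $\nabla\cdot\boldsymbol{\sigma}(\boldsymbol{u}_I)=\boldsymbol{0}$ classically, and then let $\varepsilon\downarrow 0$: the integral of $\boldsymbol{\sigma}\cdot\boldsymbol{n}$ over the inner boundary of $\Omega\setminus T_\varepsilon$ converges to the traction jump across $\Gamma_c$, which by the distributional meaning of the source equals $\int_{\Gamma_c}P\boldsymbol{n}\,dS$ up to orientation, while the integral over $\partial\Omega$ is unaffected because $\partial\Omega$ stays a fixed positive distance from $\Gamma_c$, where $\boldsymbol{u}_I$ and its trace are smooth. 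Convergence of the boundary-layer integral follows from the standard local regularity of elastostatic fields carrying a surface load (equivalently, $\boldsymbol{u}_I$ is $\boldsymbol{H}^1$ up to $\Gamma_c$ from either side, the problem being a transmission problem). No such limiting argument is needed in the 'hole' case, since $\boldsymbol{u}_H\in\boldsymbol{H}^1(\Omega\setminus\Omega_C)$ by the well-posedness established earlier in this section.
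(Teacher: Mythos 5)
Your proposal is correct and is essentially the paper's own argument: the paper simply integrates the strong form of each PDE over the domain and applies the divergence theorem with the boundary conditions, which is exactly what your testing of the weak forms against the constant fields $\boldsymbol{e}_k$ (using $\boldsymbol{\epsilon}(\boldsymbol{e}_k)=\boldsymbol{0}$) amounts to. Your tubular-neighbourhood justification of the divergence theorem near $\Gamma_c$ is a rigor refinement the paper omits (it works formally with the discrete sum of Dirac distributions and lets $N_S\rightarrow\infty$), but it does not change the route.
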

\noindent
\begin{proof}
To prove that the above equation holds true, we integrate the PDE of both approaches over the computational domain $\Omega$.

For the immersed boundary approach, we get 
$$-\int_{\Omega}\nabla\cdot\boldsymbol{\sigma}d\Omega=\int_{\Omega}\sum_{j=1}^{N_S}P(\boldsymbol{x}_j)\boldsymbol{n}(\boldsymbol{x}_j)\delta(\boldsymbol{x}-\boldsymbol{x}_j)\Delta S(\boldsymbol{x}_j)d\Omega,$$
then after applying Gauss Theorem in the LHS and simplifying the RHS, we obtain
$$-\int_{\partial\Omega}\boldsymbol{\sigma}\cdot\boldsymbol{n}(\boldsymbol{x})dS=\sum_{j=1}^{N_S}P(\boldsymbol{x}_j)\boldsymbol{n}(\boldsymbol{x}_j)\Delta S(\boldsymbol{x}_j).$$
By substituting the Robin's boundary condition and letting $N_S\rightarrow\infty$, i.e. $\Delta S(\boldsymbol{x}_j)\rightarrow0$, the equation becomes
\begin{equation}
\label{Eq_immersed_integral}
\int_{\partial\Omega}\kappa\boldsymbol{u_I}dS=\int_{\Gamma_C}P(\boldsymbol{x})\boldsymbol{n}(\boldsymbol{x})dS.
\end{equation}

Subsequently, we do the same thing for the 'hole' approach. Then, we get
$$-\int_{\Omega}\nabla\cdot\boldsymbol{\sigma}d\Omega=0,$$
and we apply Gauss Theorem:
$$-\int_{\partial\Omega\cup\Gamma_C}\boldsymbol{\sigma}\cdot\boldsymbol{n}(\boldsymbol{x})dS=0,$$
which implies
$$-\int_{\partial\Omega}\boldsymbol{\sigma}\cdot\boldsymbol{n}(\boldsymbol{x})dS-\int_{\Gamma_C}\boldsymbol{\sigma}\cdot\boldsymbol{n}(\boldsymbol{x})dS=0.$$
Using the boundary conditions, we get
$$\int_{\partial\Omega}\kappa\boldsymbol{u}_HdS=\int_{\Gamma_C}P(\boldsymbol{x})\boldsymbol{n}(\boldsymbol{x})dS,$$
which is exactly the same as Eq (\ref{Eq_immersed_integral}). Hence we proved that
$$\int_{\partial \Omega} \kappa \boldsymbol{u}_H d S = \int_{\partial \Omega} \kappa \boldsymbol{u}_I dS = \int_{\Gamma_C} P(\boldsymbol{x}) \boldsymbol{n}(\boldsymbol{x}) d S.$$ 
\end{proof}

Hence, the two different approaches are consistent in the sense of global conservation of momentum and therefore the results from both approaches should be comparable. 
\subsection{The Smoothed Particle Approach}
\noindent
The Gaussian distribution is used here as an approximation for the Dirac Delta distribution. Hereby, we show that in the $n$-dimensional case, the Gaussian distribution is a proper approximation for the Dirac Delta distribution.  

\begin{lemma}\label{lemma_gaussian}
For an open domain $\Omega=(x_{1,1},x_{1,2})\times(x_{2,1},x_{2,2})\times\cdots\times(x_{n,1},x_{n,2})\subset\R^n, n\geqslant2$, let $$\delta_{\varepsilon}(\boldsymbol{x}-\boldsymbol{x'})=\FS{1}{(2\pi\varepsilon^2)^{n/2}}\exp\{-\FS{\|\boldsymbol{x}-\boldsymbol{x'}\|^2}{2\varepsilon^2}\},$$ where $\boldsymbol{x'}=(x'_1,\dots,x'_n)\in\Omega$, then\\
\vspace{0.2cm}
(i) $\lim_{\varepsilon\rightarrow 0^+}\delta_{\varepsilon}(\boldsymbol{x}-\boldsymbol{x'})\rightarrow0,$ for all $\boldsymbol{x}\neq\boldsymbol{x'}$;\\
\vspace{0.2cm}
(ii) Let $f(\boldsymbol{x})\in\mathbb{C}^2(\R^d)$ and $\|f(\boldsymbol{x})\|\leqslant M<+\infty$, then
there is a $C > 0$ such that 
$$ |\int_\Omega\delta_{\varepsilon}(\boldsymbol{x}-\boldsymbol{x'})f(\boldsymbol{x})d\Omega - f(\boldsymbol{x'})| \leqslant C \varepsilon^2 
\text{ as } \varepsilon \rightarrow 0^+.$$

\end{lemma}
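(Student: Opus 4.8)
The plan is to dispatch the two parts separately. Part~(i) is an elementary limit: for a fixed $\boldsymbol{x}\neq\boldsymbol{x}'$ set $r=\|\boldsymbol{x}-\boldsymbol{x}'\|>0$ and write $\delta_{\varepsilon}(\boldsymbol{x}-\boldsymbol{x}')=(2\pi)^{-n/2}\varepsilon^{-n}\exp\{-r^2/(2\varepsilon^2)\}$; the substitution $u=1/\varepsilon\to\infty$ turns this into $(2\pi)^{-n/2}u^{n}e^{-r^2u^2/2}$, which tends to $0$ because the Gaussian factor decays faster than any polynomial in $u$.

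For part~(ii) I would carry out the standard mollifier estimate, with two adjustments relative to the textbook version: the integration is over the box $\Omega$ rather than over all of $\R^n$, and the claimed rate is $\varepsilon^2$ rather than the generic $\varepsilon$, which forces us to use the radial symmetry of the Gaussian so that the first-order Taylor term cancels. Since $\boldsymbol{x}'\in\Omega$ and $\Omega$ is open, fix $\rho>0$ with $\overline{B(\boldsymbol{x}',\rho)}\subset\Omega$, and write
\[
\int_{\Omega}\delta_{\varepsilon}(\boldsymbol{x}-\boldsymbol{x}')f(\boldsymbol{x})\,d\Omega-f(\boldsymbol{x}')=\int_{\Omega}\delta_{\varepsilon}(\boldsymbol{x}-\boldsymbol{x}')\bigl(f(\boldsymbol{x})-f(\boldsymbol{x}')\bigr)\,d\Omega+f(\boldsymbol{x}')\Bigl(\int_{\Omega}\delta_{\varepsilon}(\boldsymbol{x}-\boldsymbol{x}')\,d\Omega-1\Bigr).
\]
Using $\int_{\R^n}\delta_{\varepsilon}=1$ together with $\R^n\setminus\Omega\subset\{\|\boldsymbol{x}-\boldsymbol{x}'\|\geqslant\rho\}$, the second term is bounded by $M\int_{\|\boldsymbol{y}\|\geqslant\rho}\delta_{\varepsilon}(\boldsymbol{y})\,d\boldsymbol{y}$, and the rescaling $\boldsymbol{y}=\varepsilon\boldsymbol{z}$ reduces this to the standard-Gaussian tail $\int_{\|\boldsymbol{z}\|\geqslant\rho/\varepsilon}(2\pi)^{-n/2}e^{-\|\boldsymbol{z}\|^2/2}\,d\boldsymbol{z}$, which decays faster than any power of $\varepsilon$. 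I would then split the first integral over $B(\boldsymbol{x}',\rho)$ and over $\Omega\setminus B(\boldsymbol{x}',\rho)$: on the outer piece $|f(\boldsymbol{x})-f(\boldsymbol{x}')|\leqslant 2M$ and the Gaussian mass there is again exponentially small, so that contribution is $o(\varepsilon^2)$; on $B(\boldsymbol{x}',\rho)$ apply Taylor's theorem with Lagrange remainder, $f(\boldsymbol{x})-f(\boldsymbol{x}')=\nabla f(\boldsymbol{x}')\cdot(\boldsymbol{x}-\boldsymbol{x}')+\frac{1}{2}(\boldsymbol{x}-\boldsymbol{x}')^{T}\boldsymbol{H}(\boldsymbol{\xi})(\boldsymbol{x}-\boldsymbol{x}')$ with $\boldsymbol{\xi}$ on the segment joining $\boldsymbol{x}'$ and $\boldsymbol{x}$, hence in the compact set $\overline{B(\boldsymbol{x}',\rho)}$ on which $\|\boldsymbol{H}\|\leqslant M_2$ for some $M_2>0$. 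The linear term integrates to exactly zero because $\delta_{\varepsilon}(\cdot-\boldsymbol{x}')$ is even about $\boldsymbol{x}'$ and $B(\boldsymbol{x}',\rho)$ is symmetric about $\boldsymbol{x}'$, so each component $(x_i-x_i')$ produces an odd integrand; the remainder is bounded by $\frac{M_2}{2}\int_{\R^n}\delta_{\varepsilon}(\boldsymbol{y})\|\boldsymbol{y}\|^2\,d\boldsymbol{y}=\frac{M_2 n}{2}\varepsilon^2$ by the (coordinatewise) variance identity for the isotropic Gaussian. Collecting the three pieces gives the desired bound $C\varepsilon^2$.

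The genuinely delicate point is the cancellation of the first-order Taylor term: it works only because we localize to a ball centred at $\boldsymbol{x}'$ before expanding, and this is exactly what upgrades the estimate from the generic $O(\varepsilon)$ mollifier rate to the claimed $O(\varepsilon^2)$. The rest is bookkeeping — the two Gaussian-tail terms both reduce, after $\boldsymbol{y}=\varepsilon\boldsymbol{z}$, to a standard-Gaussian tail over $\{\|\boldsymbol{z}\|\geqslant\rho/\varepsilon\}$ and are therefore $o(\varepsilon^{k})$ for every $k$, while the second-moment integral is computed one coordinate at a time. Note that the argument uses only boundedness of $f$ on $\Omega$ and $C^2$-regularity of $f$ in a neighbourhood of $\boldsymbol{x}'$; no global control of the derivatives of $f$ is required.
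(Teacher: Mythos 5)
Your proof is correct, and it reaches the same two essential ingredients as the paper's argument: the first-order Taylor term is killed by the symmetry of the Gaussian about $\boldsymbol{x}'$, the second-order term produces the $\varepsilon^2$ rate via the Gaussian second moment, and the discrepancy between integrating over $\Omega$ and over $\R^n$ is a Gaussian tail that decays faster than any power of $\varepsilon$. The bookkeeping, however, is genuinely different. The paper computes $\int_{\R^n}\delta_\varepsilon f$ explicitly coordinate by coordinate after a global change of variables, expands $f$ to second order with an $\mathcal{O}(\varepsilon^3)$ remainder, evaluates the moments via $\Gamma(\tfrac{d+1}{2})$, and then subtracts the box-complement contributions expressed through $\erf$. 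You instead add and subtract $f(\boldsymbol{x}')$, isolate the mass deficit $f(\boldsymbol{x}')(\int_\Omega\delta_\varepsilon-1)$, localize the Taylor expansion to a ball $B(\boldsymbol{x}',\rho)\subset\Omega$, and use the Lagrange form of the second-order remainder with a Hessian bound on the compact ball. This buys you two things the paper's version does not cleanly deliver: (a) your argument uses exactly the stated hypothesis $f\in C^2$ (the paper's $\mathcal{O}(\varepsilon^3)$ term implicitly presumes a controlled third derivative over the whole range of integration), and (b) you only need $C^2$-regularity near $\boldsymbol{x}'$ plus boundedness elsewhere, which you correctly flag. The paper's route, in exchange, yields the explicit leading coefficient $\frac{\varepsilon^2}{\sqrt{\pi}}\Gamma(\tfrac32)\sum_i f''_{x_i,x_i}(\boldsymbol{x}')$ (i.e.\ a Laplacian correction term), which your one-sided Hessian bound does not identify; that explicit constant is not needed for the lemma as stated. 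One small point worth making explicit in your write-up: in part (i) the prefactor $\varepsilon^{-n}$ diverges, so the conclusion rests on the exponential beating the polynomial — you say this, and it is exactly the justification the paper glosses over.
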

\begin{proof}
(i) Since $\boldsymbol{x}\neq\boldsymbol{x'}$, $\lim_{\varepsilon\rightarrow0^+}\exp\{-\FS{\|\boldsymbol{x}-\boldsymbol{x'}\|^2}{2\varepsilon^2}\}\rightarrow0$. Thus, $$\lim_{\varepsilon\rightarrow 0^+}\delta_{\varepsilon}(\boldsymbol{x}-\boldsymbol{x'})\rightarrow0, \mbox{ for all $\boldsymbol{x}\neq\boldsymbol{x'}$.}$$
(ii) Now we consider
\begin{equation*}
\int_\Omega\delta_{\varepsilon}(\boldsymbol{x}-\boldsymbol{x'})f(\boldsymbol{x})d\Omega=\int_\Omega\FS{1}{(2\pi\varepsilon^2)^{n/2}}\exp\{-\FS{\|\boldsymbol{x}-\boldsymbol{x'}\|^2}{2\varepsilon^2}\}f(\boldsymbol{x})d\Omega.
\end{equation*}
Firstly, we integrate over the infinite domain:
\begin{align*}
&\int_{\mathbb{R}^n}\delta_{\varepsilon}(\boldsymbol{x}-\boldsymbol{x'})f(\boldsymbol{x})d\Omega\\
&=\frac{1}{(2\pi\varepsilon^2)^{n/2}}\int_{-\infty}^{+\infty}\cdots\int_{-\infty}^{+\infty}\exp\{-\FS{\|\boldsymbol{x}-\boldsymbol{x'}\|^2}{2\varepsilon^2}\}f(\boldsymbol{x})dx_n\cdots dx_1\\
&=\frac{1}{(2\pi\varepsilon^2)^{n/2}}\int_{-\infty}^{+\infty}\exp\{-\FS{(x_1-x'_1)^2}{2\varepsilon^2}\}\cdots\int_{-\infty}^{+\infty}\exp\{-\FS{(x_n-x'_n)^2}{2\varepsilon^2}\}\\
&f(\boldsymbol{x})dx_n\cdots dx_1.
\end{align*}

Again let $s_i=\frac{(x_i-x_i')-\frac{x_{i,1}+x_{i,2}}{2}}{\sqrt{2}\varepsilon},$ and furthermore $\xi_i=s_i+\frac{x_{i,1}+x_{i,2}}{2},i=\{1,2,\dots,n\}$. We denote $\boldsymbol{x_1}=(x_{1,1},x_{2,1},\dots,x_{n,1})$, $\boldsymbol{x_2}=(x_{1,2},x_{2,2},\dots,x_{n,2})$ and $\boldsymbol{x'}=(x'_1,x'_2\dots,x'_n)$.
By Taylor Expansion, $f(\boldsymbol{x})$ can be rewritten as 
\begin{align*}
&f(\boldsymbol{x})=f(\sqrt{2}\varepsilon\boldsymbol{s}+\frac{\boldsymbol{x_1}+\boldsymbol{x_2}}{2}+\boldsymbol{x'})\\
&=f(\boldsymbol{x'})+\nabla f(\boldsymbol{x'})(\sqrt{2}\varepsilon\boldsymbol{s}+\frac{\boldsymbol{x_1}+\boldsymbol{x_2}}{2})\\
&+\frac{1}{2!}(\sqrt{2}\varepsilon\boldsymbol{s}+\frac{\boldsymbol{x_1}+\boldsymbol{x_2}}{2})^T\boldsymbol{H}(\boldsymbol{x'})(\sqrt{2}\varepsilon\boldsymbol{s}+\frac{\boldsymbol{x_1}+\boldsymbol{x_2}}{2})+\mathcal{O}(\varepsilon^3)\\
&=f(\boldsymbol{x'})+\nabla f(\boldsymbol{x'})\sqrt{2}\varepsilon(\boldsymbol{s}+\frac{\boldsymbol{x_1}+\boldsymbol{x_2}}{2\sqrt{2}\varepsilon})\\
&+\varepsilon^2(\boldsymbol{s}+\frac{\boldsymbol{x_1}+\boldsymbol{x_2}}{2\sqrt{2}\varepsilon})^T\boldsymbol{H}(\boldsymbol{x'})(\sqrt{2}\varepsilon\boldsymbol{s}+\frac{\boldsymbol{x_1}+\boldsymbol{x_2}}{2\sqrt{2}\varepsilon})+\mathcal{O}(\varepsilon^3)\\
&=f(\boldsymbol{x'})+\nabla f(\boldsymbol{x'})\sqrt{2}\varepsilon\boldsymbol{\xi}+\varepsilon^2\boldsymbol{\xi}^T\boldsymbol{H}(\boldsymbol{x'})\boldsymbol{\xi}+\mathcal{O}(\varepsilon^3)
\end{align*}
where $\boldsymbol{H}(\boldsymbol{x'})$ is Hessian matrix of $f(\boldsymbol{x})$. For any non-negative integer $d$, 
\begin{equation*}
\int_{-\infty}^{+\infty}z^de^{-z^2}dz=
\left\lbrace
\begin{aligned}
&0, &\mbox{if $d$ is odd,}\\
&\Gamma(\FS{d+1}{2}), &\mbox{if $d$ is even.}
\end{aligned}
\right.
\end{equation*}

First we calculate 
\begin{align*}
&\int_{\mathbb{R}^n}\delta_{\varepsilon}(\boldsymbol{x}-\boldsymbol{x'})f(\boldsymbol{x})d\Omega\\
&=\frac{1}{(2\pi\varepsilon^2)^{n/2}}\int_{-\infty}^{+\infty}\exp\{-\FS{(x_1-x'_1)^2}{2\varepsilon^2}\}\cdots\int_{-\infty}^{+\infty}\exp\{-\FS{(x_n-x'_n)^2}{2\varepsilon^2}\}\\
&f(\boldsymbol{x})dx_n\cdots dx_1\\
&=\frac{1}{\pi^{n/2}}\int_{-\infty}^{+\infty}\exp\{(-s_1+\frac{x_{1,1}+x_{1,2}}{2})^2\}\cdots\int_{-\infty}^{+\infty}\exp\{(-s_n+\frac{x_{n,1}+x_{n,2}}{2})^2\}\\
&f(\sqrt{2}\varepsilon\boldsymbol{s}+\frac{\boldsymbol{x_1}+\boldsymbol{x_2}}{2}+\boldsymbol{x'})ds_n\cdots ds_1\\
&=\frac{1}{\pi^{n/2}}\int_{-\infty}^{+\infty}e^{-\xi_1^2}\cdots\int_{-\infty}^{+\infty}e^{-\xi_n^2}f(\sqrt{2}\varepsilon\boldsymbol{\xi}+\boldsymbol{x'})d\xi_n\cdots d\xi_1\\
&=\frac{1}{\pi^{n/2}}\int_{-\infty}^{+\infty}e^{-\xi_1^2}\cdots\int_{-\infty}^{+\infty}e^{-\xi_n^2}[f(\boldsymbol{x'})+\nabla f(\boldsymbol{x'})\sqrt{2}\varepsilon\boldsymbol{\xi}+\varepsilon^2\boldsymbol{\xi}^T\boldsymbol{H}(\boldsymbol{x'})\boldsymbol{\xi}\\
&+\mathcal{O}(\varepsilon^3)]d\xi_n\cdots d\xi_1\\
&=\frac{f(\boldsymbol{x'})}{\pi^{n/2}}\int_{-\infty}^{+\infty}e^{-\xi_1^2}\cdots\int_{-\infty}^{+\infty}e^{-\xi_n^2}d\xi_n\cdots d\xi_1\\
&+\frac{\sqrt{2}\varepsilon}{\pi^{n/2}}\int_{-\infty}^{+\infty}e^{-\xi_1^2}\xi_1f'_{x_1}(\boldsymbol{x'})\cdots\int_{-\infty}^{+\infty}e^{-\xi_n^2}\xi_nf'_{x_n}(\boldsymbol{x'})d\xi_n\cdots d\xi_1\\
&+\frac{\varepsilon^2}{\pi^{n/2}}\int_{-\infty}^{+\infty}e^{-\xi_1^2}(\sqrt{2}\xi_1+f''_{x_1,x_1}(\boldsymbol{x'})\xi_1^2+\sum_{i=1,i\neq1}^{n}f''_{x_1,x_i}(\boldsymbol{x'})\xi_1\xi_i\cdots\\
&\int_{-\infty}^{+\infty}e^{-\xi_n^2}(\sqrt{2}\xi_1+(f''_{x_n,x_n})(\boldsymbol{x'})\xi_n^2+\sum_{i=1,i\neq n}^{n}f''_{x_n,x_i}(\boldsymbol{x'})\xi_n\xi_id\xi_n\cdots d\xi_1+\mathcal{O}(\varepsilon^3)\\
&=f(\boldsymbol{x'})+\frac{\varepsilon^2}{\sqrt{\pi}}\Gamma(\frac{3}{2})\sum_{i=1}^{d}f''_{x_i,x_i}(\boldsymbol{x'})+\mathcal{O}(\varepsilon^3)\rightarrow f(\boldsymbol{x'}), \mbox{as $\varepsilon\rightarrow0^+$}.
\end{align*}

For the integral over the given domain $\Omega=(x_{1,1},x_{1,2})\times\cdots\times(x_{n,1},x_{n,2})$, it can be written as 

\begin{align*}
&\int_{x_{1,1}}^{x_{1,2}}\cdots\int_{x_{n,1}}^{x_{n,2}}dx_n\cdots dx_1\\
&=\int_{-\infty}^{+\infty}\cdots\int_{-\infty}^{+\infty}dx_n\cdots dx_1-\sum_{i=1}^{n}\int_{x_{1,1}}^{x_{1,2}}\cdots\int_{-\infty}^{x_{i,1}}\cdots\int_{x_{n,1}}^{x_{n,2}} dx_n\cdots dx_1\\
&-\sum_{i=1}^{n}\int_{x_{1,1}}^{x_{1,2}}\cdots\int_{x_{i,2}}^{+\infty}\cdots\int_{x_{n,1}}^{x_{n,2}} dx_n\cdots dx_1\\
&=(\sqrt{2}\varepsilon)^n\left[\int_{-\infty}^{+\infty}\cdots\int_{-\infty}^{+\infty}ds_n\cdots ds_1-\sum_{i=1}^{n}\int_{s_{1,1}}^{s_{1,2}}\cdots\int_{-\infty}^{s_{i,1}}\cdots\int_{s_{n,1}}^{s_{n,2}} ds_n\cdots ds_1\right.\\
&\left.-\sum_{i=1}^{n}\int_{\xi_{1,1}}^{\xi_{1,2}}\cdots\int_{\xi_{i,2}}^{+\infty}\cdots\int_{\xi_{n,1}}^{\xi_{n,2}} d\xi_n\cdots d\xi_1\right]\\
&=(\sqrt{2}\varepsilon)^n\left[\int_{-\infty}^{+\infty}\cdots\int_{-\infty}^{+\infty}d\xi_n\cdots d\xi_1-\sum_{i=1}^{n}\int_{\xi_{1,1}}^{\xi_{1,2}}\cdots\int_{-\infty}^{\xi_{i,1}}\cdots\int_{\xi_{n,1}}^{\xi_{n,2}} d\xi_n\cdots d\xi_1\right.\\
&\left.-\sum_{i=1}^{n}\int_{\xi_{1,1}}^{\xi_{1,2}}\cdots\int_{\xi_{i,2}}^{+\infty}\cdots\int_{\xi_{n,1}}^{\xi_{n,2}} d\xi_n\cdots d\xi_1\right],
\end{align*}
where $\xi_{i,1}=\frac{x_{i,1}-x'_i}{\sqrt{2}\varepsilon}$ and $\xi_{i,2}=\frac{x_{i,2}-x'_i}{\sqrt{2}\varepsilon}$.
Therefore, 
\begin{align*}
&\left|\int_\Omega\delta_{\varepsilon}(\boldsymbol{x}-\boldsymbol{x'})f(\boldsymbol{x})d\Omega-f(\boldsymbol{x'})\right|\\
&=\left|f(\boldsymbol{x'})+\frac{\varepsilon^2}{\sqrt{\pi}}\Gamma(\frac{3}{2})\sum_{i=1}^{d}f''_{x_i,x_i}(\boldsymbol{x'})+\mathcal{O}(\varepsilon^3)\right.\\
&-\frac{1}{\pi^{n/2}}\left[\sum_{i=1}^{n}\int_{\xi_{1,1}}^{\xi_{1,2}}e^{-\xi_1}\cdots\int_{-\infty}^{\xi_{i,1}}e^{-\xi_i}\cdots\int_{\xi_{n,1}}^{\xi_{n,2}}e^{-\xi_n}f(\sqrt{2}\varepsilon\boldsymbol{\xi}+\boldsymbol{x'}) d\xi_n\cdots d\xi_1\right.\\
&\left.\left.+\sum_{i=1}^{n}\int_{\xi_{1,1}}^{\xi_{1,2}}e^{-\xi_1}\cdots\int_{\xi_{i,2}}^{+\infty}e^{-\xi_i}\cdots\int_{\xi_{n,1}}^{\xi_{n,2}}e^{-\xi_n}f(\sqrt{2}\varepsilon\boldsymbol{\xi}+\boldsymbol{x'}) d\xi_n\cdots d\xi_1\right]-f(\boldsymbol{x'})\right|\\
&\leqslant \left|\frac{\varepsilon^2}{\sqrt{\pi}}\Gamma(\frac{3}{2})\sum_{i=1}^{d}f''_{x_i,x_i}(\boldsymbol{x'})+\mathcal{O}(\varepsilon^3)\right|\\
&+\frac{M}{2^{n-1}}\sum_{j=1}^n\prod_{i=1,i\neq j}^{n}[\erf(\xi_{j,2})-\erf(\xi_{j,1})+2][\erf(\xi_{i,2})-\erf(\xi_{i,1})]\\
&\leqslant \left|\frac{\varepsilon^2}{\sqrt{\pi}}\Gamma(\frac{3}{2})\sum_{i=1}^{d}f''_{x_i,x_i}(\boldsymbol{x'})+\mathcal{O}(\varepsilon^3)\right|\\
&+\frac{M}{2}\sum_{j=1}^n[\erf(\xi_{j,1})-\erf(\xi_{j,2})+2]\rightarrow0, \mbox{as $\varepsilon\rightarrow0^+$,}
\end{align*}
since $\|f(\boldsymbol{x})\|<M<+\infty$, $\xi_{i,1}\rightarrow -\infty$ and $\xi_{i,2}\rightarrow \infty$ respectively. Using
$1 - \erf(y) < \frac{2}{\sqrt{\pi}}~\exp(-y)$ for $y > 0$ and the fact
that $\exp(y) < \frac{1}{y^{\alpha}}$ as $y \rightarrow \infty$, we see
that the second term approximates zero faster than the first term.
Hence, we conclude that there is a $C>0$ such that
$$ |\int_\Omega\delta_{\varepsilon}(\boldsymbol{x}-\boldsymbol{x'})f(\boldsymbol{x})d\Omega - f(\boldsymbol{x'})| \leqslant C \varepsilon^2 
\text{ as } \varepsilon \rightarrow 0^+.$$	
\end{proof}

As a remark we add that setting $f(\boldsymbol{x}) = 1$, immediately shows
that there is a $C>0$ such that 
$$ |\int_\Omega\delta_{\varepsilon}(\boldsymbol{x}-\boldsymbol{x'})d\Omega - 1| \leqslant C \varepsilon^2 
\text{ as } \varepsilon \longrightarrow 0^+.$$
Using the result above, we start with analysing different approaches with only one relatively big cell in the computational domain. According to the model described in Eq (\ref{Eq_ElasTempeq}), the forces released on the boundary of the cell are the superposition of point forces on the midpoint of each surface element. For example, if we use a square shape to approximate the biological cell, then the forces are depicted in Figure \ref{Fig_mesh_2D}. Therefore, in $n$ dimensional case ($n > 1$), if the biological cell is a n-dimensional hypercube, then the forces can be rewritten as 
\begin{equation}
\label{Eq_temp}
\left.
\begin{aligned}
%\boldsymbol{f}_t&=P\sum_{i=1}^n\{\boldsymbol{e_i}(\prod_{j=1,j\neq i}\Delta x_j)[\delta(x_1-x'_1,\dots,x_i-(x'_i+\frac{\Delta x_i}{2}),\dots, x_n-x'_n)\\
\boldsymbol{f}_t&=\sum_{j=1}^{N_S}P(\boldsymbol{x}_j)\boldsymbol{n}(\boldsymbol{x}_j)\delta(\boldsymbol{x}-\boldsymbol{x}_j)\Delta S(\boldsymbol{x}_j)\\
&=P\sum_{i=1}^n\{\boldsymbol{e_i}(\Delta x)^{n-1}[\delta(x_1-x'_1,\dots,x_i-(x'_i+\frac{\Delta x}{2}),\dots, x_n-x'_n)\\
&-\delta(x_1-x'_1,\dots,x_i-(x'_i-\frac{\Delta x}{2}),\dots, x_n-x'_n)]\},
\end{aligned}
\right.
\end{equation}
where $\boldsymbol{e_i}$ is the standard basis vector with $1$ in the i-th coordinate and $0's$ elsewhere, and $\Delta x$ is the length of cell boundary in each coordinate. For the smoothed force approach, we set $\delta(\boldsymbol{x})\approx\delta_{\varepsilon}(\boldsymbol{x})$. The force is given by 
\begin{equation}
\label{Eq_temp_smoothed}
\left.
\begin{aligned}
\boldsymbol{f}_\varepsilon&=P\sum_{i=1}^n\{\boldsymbol{e_i}(\Delta x)^{n-1}[\delta_\varepsilon(x_1-x'_1,\dots,x_i-(x'_i+\frac{\Delta x}{2}),\dots, x_n-x'_n)\\
&-\delta_\varepsilon(x_1-x'_1,\dots,x_i-(x'_i-\frac{\Delta x}{2}),\dots, x_n-x'_n)]\}.
\end{aligned}
\right.
\end{equation}

Following the same process in two dimensions \cite{peng2019pointforces} and thanks to the continuity of Gaussian distribution, as $\Delta x\rightarrow0$, the force converges to 
\begin{equation}
\label{Eq_temp_SPH}
\boldsymbol{f}_S=P(\Delta x)^n\nabla\delta_{\varepsilon}(\boldsymbol{x}-\boldsymbol{x'}).
\end{equation}

\begin{figure}[htpb]
\centering
\includegraphics[scale=0.35]{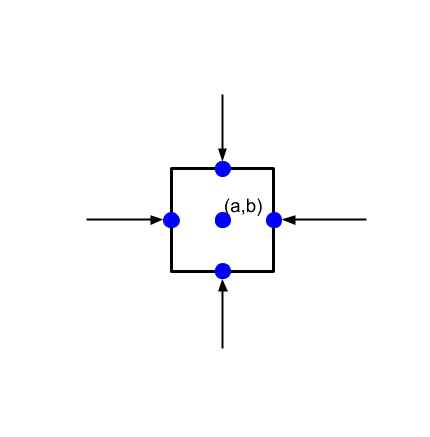}
\caption{We consider a rectangular shape cell in two dimensions, with the centre position at $(a,b)$. The forces exerted on the boundary are indicated by arrows}
\label{Fig_mesh_2D}
\end{figure}

\begin{theorem}\label{Th_delta_gauss}
Let $\boldsymbol{u}_h\subset\boldsymbol{V}_h(\Omega)$ be the Galerkin solution to the problem
\begin{equation}
\label{Eq_delta_2d}
(BVP)\left\{
\begin{aligned}
&\text{Find } \boldsymbol{u}_h \in \boldsymbol{V}_h(\Omega) \text{ such that }
a(\boldsymbol{u}_h,\boldsymbol{\phi}_h)=\int_{\Omega}\boldsymbol{f}_t \boldsymbol{\phi}_h d \Omega,\\
&\mbox{for all $\boldsymbol{\phi}_h \in \boldsymbol{V}_h(\Omega)$,}
\end{aligned}
\right.
\end{equation} 
and $\boldsymbol{u^\varepsilon}_h$ be the Galerkin solution to 
\begin{equation}
\label{Eq_smoothed_2d}
(BVP_\varepsilon)\left\lbrace 
\begin{aligned}
&\text{Find } \boldsymbol{u}^{\varepsilon}_h \in \boldsymbol{V}_h(\Omega) \text{ such that }
a(\boldsymbol{u}^{\varepsilon}_h,\boldsymbol{\phi}_h)=\int_{\Omega}\boldsymbol{f}_{\varepsilon} \boldsymbol{\phi}_h d \Omega,\\
&\mbox{for all $\boldsymbol{\phi}_h \in \boldsymbol{V}_h(\Omega)$.}
\end{aligned}
\right.	
\end{equation}
Then there is an $L_1>0$ such that
$||\boldsymbol{u^\varepsilon}_h-\boldsymbol{u}_h||_{\boldsymbol{H^1}(\Omega)} 
\leqslant L_1~ (\Delta x)^{(n-1)/2} ~\varepsilon$.     
\end{theorem}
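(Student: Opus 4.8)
The plan is the standard perturbation‑of‑data argument in the energy norm. First, $\boldsymbol{f}_\varepsilon$ is a finite linear combination of translated Gaussians, hence bounded and continuous, so the linear functional $\boldsymbol{\phi}_h\mapsto\int_\Omega\boldsymbol{f}_\varepsilon\cdot\boldsymbol{\phi}_h\,d\Omega$ is bounded on $\boldsymbol{V}_h(\Omega)$; combined with the coercivity of $a(\cdot,\cdot)$ from Lemma~\ref{lemma_coerc}, Lax--Milgram gives existence and uniqueness of $\boldsymbol{u}^{\varepsilon}_h$ just as in Theorem~\ref{Th_solexist}. Set $\boldsymbol{v}:=\boldsymbol{u}^{\varepsilon}_h-\boldsymbol{u}_h\in\boldsymbol{V}_h(\Omega)$ and subtract the two Galerkin identities. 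Since $\boldsymbol{f}_\varepsilon$ is obtained from $\boldsymbol{f}_t$ by replacing each Dirac mass $\delta(\cdot-\boldsymbol{x}_j)$ by its Gaussian regularisation $\delta_\varepsilon(\cdot-\boldsymbol{x}_j)$ carrying the same weight $P(\boldsymbol{x}_j)\boldsymbol{n}(\boldsymbol{x}_j)\Delta\Gamma(\boldsymbol{x}_j)$ (compare (\ref{Eq_temp}) and (\ref{Eq_temp_smoothed})), this gives, for all $\boldsymbol{\phi}_h\in\boldsymbol{V}_h(\Omega)$,
\[
a(\boldsymbol{v},\boldsymbol{\phi}_h)=\sum_{j=1}^{N_S}P(\boldsymbol{x}_j)\,\Delta\Gamma(\boldsymbol{x}_j)\,\Big[\int_\Omega\delta_\varepsilon(\boldsymbol{x}-\boldsymbol{x}_j)\,\boldsymbol{n}(\boldsymbol{x}_j)\cdot\boldsymbol{\phi}_h(\boldsymbol{x})\,d\Omega-\boldsymbol{n}(\boldsymbol{x}_j)\cdot\boldsymbol{\phi}_h(\boldsymbol{x}_j)\Big],
\]
where $N_S=2n$ is the number of faces of the cubic cell. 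Choosing $\boldsymbol{\phi}_h=\boldsymbol{v}$ and invoking coercivity reduces the theorem to estimating the $N_S$ bracketed smoothing defects evaluated at $\boldsymbol{v}$ and summing them against the weights $P(\boldsymbol{x}_j)\Delta\Gamma(\boldsymbol{x}_j)$.

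For the per‑face estimate one cannot simply quote Lemma~\ref{lemma_gaussian}, which is stated for $C^2$ integrands and yields an $\mathcal{O}(\varepsilon^2)$ bound, because $\boldsymbol{v}$ is piecewise linear and globally only Lipschitz. Instead I would split each bracket into $\int_\Omega\delta_\varepsilon(\boldsymbol{x}-\boldsymbol{x}_j)\,\boldsymbol{n}(\boldsymbol{x}_j)\cdot(\boldsymbol{v}(\boldsymbol{x})-\boldsymbol{v}(\boldsymbol{x}_j))\,d\Omega$ plus $\boldsymbol{n}(\boldsymbol{x}_j)\cdot\boldsymbol{v}(\boldsymbol{x}_j)\,\big(\int_\Omega\delta_\varepsilon(\boldsymbol{x}-\boldsymbol{x}_j)\,d\Omega-1\big)$. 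The second summand is controlled by the remark following Lemma~\ref{lemma_gaussian} (applicable because the cell $\Omega_C$ is compactly contained in $\Omega$, so every midpoint $\boldsymbol{x}_j$ stays at a fixed positive distance from $\partial\Omega$): $\big|\int_\Omega\delta_\varepsilon(\boldsymbol{x}-\boldsymbol{x}_j)\,d\Omega-1\big|=\mathcal{O}(\varepsilon^2)$. For the first summand, Cauchy--Schwarz in the $\delta_\varepsilon$‑weighted measure bounds it by $\big(\int_\Omega\delta_\varepsilon(\boldsymbol{x}-\boldsymbol{x}_j)\,|\boldsymbol{v}(\boldsymbol{x})-\boldsymbol{v}(\boldsymbol{x}_j)|^2\,d\Omega\big)^{1/2}$, and then the Lipschitz bound $|\boldsymbol{v}(\boldsymbol{x})-\boldsymbol{v}(\boldsymbol{x}_j)|\le\|\nabla\boldsymbol{v}\|_{L^\infty(\Omega)}\,|\boldsymbol{x}-\boldsymbol{x}_j|$ together with the second moment of the Gaussian, $\int_{\mathbb{R}^n}\delta_\varepsilon(\boldsymbol{y})|\boldsymbol{y}|^2\,d\boldsymbol{y}=n\varepsilon^2$, makes each bracket of order $\varepsilon\,\|\nabla\boldsymbol{v}\|_{L^\infty(\Omega)}$.

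To finish, sum over the $N_S$ faces: writing $P(\boldsymbol{x}_j)\Delta\Gamma(\boldsymbol{x}_j)=P(\boldsymbol{x}_j)\sqrt{\Delta\Gamma(\boldsymbol{x}_j)}\cdot\sqrt{\Delta\Gamma(\boldsymbol{x}_j)}$ and applying Cauchy--Schwarz in $j$ peels off the factor $\big(\sum_j\Delta\Gamma(\boldsymbol{x}_j)\big)^{1/2}=|\Gamma_c|^{1/2}=\sqrt{2n}\,(\Delta x)^{(n-1)/2}$ — the origin of the $(\Delta x)^{(n-1)/2}$ in the statement — leaving a quadrature sum for $\|\boldsymbol{v}-T_\varepsilon\boldsymbol{v}\|_{L^2(\Gamma_c)}$, with $T_\varepsilon\boldsymbol{\phi}(\boldsymbol{y}):=\int_\Omega\delta_\varepsilon(\boldsymbol{x}-\boldsymbol{y})\boldsymbol{\phi}(\boldsymbol{x})\,d\Omega$, which is of order $\varepsilon\,\|\boldsymbol{v}\|_{H^1(\Omega)}$ by the previous paragraph (or, alternatively, via the Trace Theorem and the elementary smoothing bound $\|\boldsymbol{\phi}-T_\varepsilon\boldsymbol{\phi}\|_{L^2(\Omega)}\le C\varepsilon\|\boldsymbol{\phi}\|_{H^1(\Omega)}$). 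Putting the pieces together, coercivity gives $K\,\|\boldsymbol{v}\|_{H^1(\Omega)}^2\le C\,\big(\sup_{\overline{\Omega}}|P|\big)\,(\Delta x)^{(n-1)/2}\,\varepsilon\,\|\boldsymbol{v}\|_{H^1(\Omega)}$, and dividing by $\|\boldsymbol{v}\|_{H^1(\Omega)}$ yields the claim with $L_1=L_1\big(n,E,\nu,\sup_{\overline{\Omega}}|P|,\boldsymbol{V}_h\big)$. Equivalently, one can repackage this using the discrete Green's functions of Theorem~\ref{Th_solexist}: write $\boldsymbol{u}_h$ and $\boldsymbol{u}^{\varepsilon}_h$ as the corresponding superpositions for the Dirac and for the mollified load, and bound each difference $\|\boldsymbol{u}^{G,\varepsilon}_h-\boldsymbol{u}^G_h\|_{H^1(\Omega)}$ by the same testing argument. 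The genuine obstacle is the low regularity of the discrete solutions: because they live in the merely piecewise‑linear space $\boldsymbol{V}_h(\Omega)$, the $\varepsilon^2$ cancellation from the vanishing odd Gaussian moment in Lemma~\ref{lemma_gaussian} is unavailable and one must settle for the first‑order rate; and controlling $\|\nabla\boldsymbol{v}\|_{L^\infty(\Omega)}$ (equivalently, the gradient part of $\|\boldsymbol{v}-T_\varepsilon\boldsymbol{v}\|_{H^1}$) forces the constant $L_1$ to depend on the fixed finite element mesh — harmless here, since $\boldsymbol{V}_h(\Omega)$ is held fixed while $\varepsilon$ and $\Delta x$ vary.
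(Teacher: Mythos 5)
Your proof is correct in substance and reaches the stated bound, but it takes a genuinely different route from the paper's. The paper's proof is much shorter: it subtracts the two Galerkin identities, invokes Lemma~\ref{lemma_gaussian} directly on the test function to assert the absolute bound $|\int_\Omega(\boldsymbol{f}_t-\boldsymbol{f}_\varepsilon)\cdot\boldsymbol{\phi}_h\,d\Omega|\leqslant C(\Delta x)^{n-1}\varepsilon^2$ (absorbing the sup-norm of $\boldsymbol{\phi}_h$ into $C$), sets $\boldsymbol{\phi}_h=\boldsymbol{w}$, and obtains $K\|\boldsymbol{w}\|^2_{H^1}\leqslant C(\Delta x)^{n-1}\varepsilon^2$; the $(\Delta x)^{(n-1)/2}\varepsilon$ in the statement is then simply the square root of the quadratic right-hand side. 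You instead observe — correctly — that Lemma~\ref{lemma_gaussian} is stated for $C^2$ integrands while the test function is only piecewise linear, so you replace the $\mathcal{O}(\varepsilon^2)$ cancellation by a first-order Lipschitz/second-moment estimate giving $\mathcal{O}(\varepsilon\,\|\nabla\boldsymbol{v}\|_{L^\infty})$ per face, keep the right-hand side \emph{linear} in $\|\boldsymbol{v}\|_{H^1}$, and divide by one power of it; the factor $(\Delta x)^{(n-1)/2}$ then arises from a Cauchy--Schwarz over the faces rather than from a square root of the energy inequality. What your route buys is honesty about the regularity obstruction (the paper's $\varepsilon^2$ claim is only defensible for a fixed mesh with the force points staying away from element interfaces, and the paper does not address this); what it costs is an explicit dependence of $L_1$ on the fixed finite-element space through $\|\nabla\boldsymbol{v}\|_{L^\infty}$ versus $\|\boldsymbol{v}\|_{H^1}$ — though the paper's constant carries an analogous hidden mesh dependence. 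Two minor bookkeeping remarks: your Cauchy--Schwarz over $j$ peels off one factor $|\Gamma_c|^{1/2}\sim(\Delta x)^{(n-1)/2}$, but if the remaining quadrature factor is itself bounded via the supremum of the per-face defects it contributes another $|\Gamma_c|^{1/2}$, giving $(\Delta x)^{n-1}$ overall — this is \emph{stronger} than the claimed rate for $\Delta x\leqslant1$, so the theorem still follows, but the ledger does not land exactly on $(\Delta x)^{(n-1)/2}$ the way you suggest; and the direct summation over the $2n$ faces already yields $(\Delta x)^{n-1}$ without any Cauchy--Schwarz, so that step is not actually needed. Neither point is a gap.
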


\begin{proof}
Using bilinearity of $a(.,.)$ gives upon setting 
$\boldsymbol{w}=\boldsymbol{u}_h-\boldsymbol{u^\varepsilon}_h$ the
following equation:
$$
a(\boldsymbol{w},\boldsymbol{\phi}_h) = \int_{\Omega} (\boldsymbol{f}_t - \boldsymbol{f}_{\epsilon}) \cdot \boldsymbol{\phi}_h d \Omega.
$$
Using the result from Lemma \ref{lemma_gaussian} and the Triangle
Inequality, bearing in mind that $||\boldsymbol{e}_i|| = 1$ and that 
the basis field functions $\boldsymbol{\phi}_h$ are bounded, and after some algebraic manipulations, we can write the
right-hand side as
\begin{equation}
\begin{aligned}
|\int_{\Omega} (\boldsymbol{f}_t - \boldsymbol{f}_{\epsilon}) \cdot \boldsymbol{\phi}_h d \Omega| \leqslant C (\Delta x)^{n-1} \varepsilon^2.
\end{aligned}
\end{equation}
Coerciveness, see Lemma \ref{lemma_coerc}, and using 
$\boldsymbol{\phi}_h = \boldsymbol{w}$, gives 
$$
K ||\boldsymbol{w}||^2_{H_1(\Omega)} \leqslant a(\boldsymbol{w},\boldsymbol{w}) \leqslant 
C (\Delta x)^{n-1} \varepsilon^2,
$$
hence there is an $L_1>0$ such that
$||\boldsymbol{w}||_{H_1(\Omega)} \leqslant L~ (\Delta x)^{(n-1)/2}~ \varepsilon$, which immediately implies that 
$$||\boldsymbol{u}_h - \boldsymbol{u}_h^{\varepsilon}||_{H_1(\Omega)} \leqslant L_1~ (\Delta x)^{(n-1)/2}~ \varepsilon$$
\end{proof}

\begin{theorem}\label{Th_gauss_SPH}
Let $\boldsymbol{u^\varepsilon}_h$ be the solution to the boundary value problems
in Eq (\ref{Eq_smoothed_2d}),
and $\boldsymbol{u}^S_h$ the solution to 
\begin{equation}
\label{Eq_SPH_2d}
(BVP_{SP})\left\lbrace
\begin{aligned}
&\text{Find } \boldsymbol{u}^{\varepsilon}_h \in \boldsymbol{V}_h(\Omega) \text{ such that }
a(\boldsymbol{u}^{\varepsilon}_h,\boldsymbol{\phi}_h)=\int_{\Omega}\boldsymbol{f}_{S} \boldsymbol{\phi}_h d \Omega,\\
&\mbox{for all $\boldsymbol{\phi}_h \in \boldsymbol{V}_h(\Omega)$.}
\end{aligned} 
\right.	
\end{equation}
Then there is an $L_2>0$ such that 
$$
\frac{1}{(\Delta x)^n} || \boldsymbol{u}_h^S - \boldsymbol{u}_h^{\epsilon} ||_{\boldsymbol{H^1}(\Omega)} \leqslant L_2 \frac{(\Delta x)^2}{\varepsilon^3}.  
$$
\end{theorem}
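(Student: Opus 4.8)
The plan is to follow the template of Theorem~\ref{Th_delta_gauss}. Set $\boldsymbol{w}=\boldsymbol{u}_h^{S}-\boldsymbol{u}_h^{\varepsilon}\in\boldsymbol{V}_h(\Omega)$; subtracting the Galerkin forms in Eq~(\ref{Eq_SPH_2d}) and Eq~(\ref{Eq_smoothed_2d}) and using bilinearity of $a(\cdot,\cdot)$ yields
$$
a(\boldsymbol{w},\boldsymbol{\phi}_h)=\int_{\Omega}(\boldsymbol{f}_{S}-\boldsymbol{f}_{\varepsilon})\cdot\boldsymbol{\phi}_h\,d\Omega,\qquad\text{for all }\boldsymbol{\phi}_h\in\boldsymbol{V}_h(\Omega).
$$
Everything reduces to estimating the right-hand side by a constant times $\|\boldsymbol{\phi}_h\|_{H^1(\Omega)}$; then, taking $\boldsymbol{\phi}_h=\boldsymbol{w}$ and invoking the coercivity of $a(\cdot,\cdot)$ from Lemma~\ref{lemma_coerc}, one power of $\|\boldsymbol{w}\|_{H^1(\Omega)}$ cancels and the estimate for $\boldsymbol{w}$ drops out.

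The core is a Taylor comparison between the smoothed point-force field $\boldsymbol{f}_{\varepsilon}$ of Eq~(\ref{Eq_temp_smoothed}) and the smoothed-particle field $\boldsymbol{f}_{S}$ of Eq~(\ref{Eq_temp_SPH}). For each direction $\boldsymbol{e}_i$, the $i$-th summand of $\boldsymbol{f}_{\varepsilon}$ is a centred difference of $\delta_{\varepsilon}$ about $\boldsymbol{x}-\boldsymbol{x}'$ with step $\Delta x$. Since $\delta_{\varepsilon}$ is $C^{\infty}$ on $\mathbb{R}^n$, the even-order Taylor terms cancel by symmetry, the first-order term is exactly the $i$-th summand of $\boldsymbol{f}_{S}=P(\Delta x)^n\nabla\delta_{\varepsilon}(\boldsymbol{x}-\boldsymbol{x}')$, and the rest is a third-order remainder that can be written as $\partial_i$ of an $L^2$ function of $\boldsymbol{x}-\boldsymbol{x}'$ of norm $\mathcal{O}\big((\Delta x)^{n+2}\,\|\partial_i^2\delta_{\varepsilon}\|_{L^2}\big)$ (equivalently, whose $L^1$ norm is $\mathcal{O}\big((\Delta x)^{n+2}\,\|\partial_i^3\delta_{\varepsilon}\|_{L^1}\big)$). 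The quantitative crux is the size of the derivatives of the Gaussian: writing $\delta_{\varepsilon}(\boldsymbol{y})=\varepsilon^{-n}\Psi(\boldsymbol{y}/\varepsilon)$ with $\Psi(\boldsymbol{z})=(2\pi)^{-n/2}\exp(-\|\boldsymbol{z}\|^2/2)$, only one factor of the product is differentiated and the other $n-1$ factors integrate to $1$, so a rescaling $\boldsymbol{y}=\varepsilon\boldsymbol{z}$ gives $\|\partial_i^3\delta_{\varepsilon}\|_{L^1(\mathbb{R}^n)}=\varepsilon^{-3}\|\psi'''\|_{L^1(\mathbb{R})}$ (with $\psi(z)=(2\pi)^{-1/2}e^{-z^2/2}$, the constant finite because $\psi$ is Schwartz) and, likewise, $\|\partial_i^2\delta_{\varepsilon}\|_{L^2(\mathbb{R}^n)}=\mathcal{O}(\varepsilon^{-(2+n/2)})$. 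Passing from $\mathbb{R}^n$ to the bounded box $\Omega$ discards only an exponentially small tail, exactly as in the proof of Lemma~\ref{lemma_gaussian}. Hence $\boldsymbol{f}_{S}-\boldsymbol{f}_{\varepsilon}\in\boldsymbol{H}^{-1}(\Omega)$ with $\|\boldsymbol{f}_{S}-\boldsymbol{f}_{\varepsilon}\|_{H^{-1}(\Omega)}\leqslant C(\Delta x)^{n+2}\varepsilon^{-3}$, which is sharp in the two-dimensional setting that drives the estimate.

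The right-hand side is then controlled cleanly: since $\boldsymbol{\phi}_h\in\boldsymbol{V}_h(\Omega)\subset\boldsymbol{H}^1_0(\Omega)$, an element-wise integration by parts (its interelement contributions telescope and its $\partial\Omega$-term vanishes because $\boldsymbol{\phi}_h|_{\partial\Omega}=\boldsymbol{0}$) gives
$$
\Big|\int_{\Omega}(\boldsymbol{f}_{S}-\boldsymbol{f}_{\varepsilon})\cdot\boldsymbol{\phi}_h\,d\Omega\Big|\leqslant\|\boldsymbol{f}_{S}-\boldsymbol{f}_{\varepsilon}\|_{H^{-1}(\Omega)}\,\|\boldsymbol{\phi}_h\|_{H^1(\Omega)}\leqslant C(\Delta x)^{n+2}\varepsilon^{-3}\,\|\boldsymbol{\phi}_h\|_{H^1(\Omega)}.
$$
Taking $\boldsymbol{\phi}_h=\boldsymbol{w}$ and using Lemma~\ref{lemma_coerc},
$$
K\,\|\boldsymbol{w}\|_{H^1(\Omega)}^2\leqslant a(\boldsymbol{w},\boldsymbol{w})=\int_{\Omega}(\boldsymbol{f}_{S}-\boldsymbol{f}_{\varepsilon})\cdot\boldsymbol{w}\,d\Omega\leqslant C(\Delta x)^{n+2}\varepsilon^{-3}\,\|\boldsymbol{w}\|_{H^1(\Omega)},
$$
so that $\|\boldsymbol{u}_h^{S}-\boldsymbol{u}_h^{\varepsilon}\|_{H^1(\Omega)}\leqslant(C/K)(\Delta x)^{n+2}\varepsilon^{-3}$; dividing by $(\Delta x)^n$ yields the claimed bound with $L_2=C/K$. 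If one prefers to avoid the integration by parts, bounding the integral instead by $\|\boldsymbol{f}_{S}-\boldsymbol{f}_{\varepsilon}\|_{L^1(\Omega)}$ times the $L^\infty$-bound of the Lagrangian basis fields reproduces the $\varepsilon^{-3}$ growth for every dimension $n$.

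I expect the main obstacle to be the clean leading-order cancellation in the Taylor step: one must verify that the surviving first-order term of the centred expansion is precisely $\boldsymbol{f}_{S}$ of Eq~(\ref{Eq_temp_SPH}) — which forces one to track the sign conventions attached to the inward normal and to the argument $\boldsymbol{x}-\boldsymbol{x}'$ — so that $\boldsymbol{f}_{S}-\boldsymbol{f}_{\varepsilon}$ is genuinely only the third-order remainder, together with its uniform-in-$\varepsilon$ control. The $\varepsilon^{-3}$ blow-up of $\|\partial^3\delta_{\varepsilon}\|_{L^1}$ (equivalently the $\varepsilon^{-(2+n/2)}$ growth of $\|\partial^2\delta_{\varepsilon}\|_{L^2}$) is exactly what makes the right-hand side of the asserted inequality degenerate as $\varepsilon\to0^+$, and it is the place where the Schwartz decay of the Gaussian — not mere $C^2$ regularity — is what keeps the integrals finite. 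The remaining ingredients (bilinearity, coercivity via Korn's inequality, and boundedness of the basis fields) are routine and already in hand from Lemma~\ref{lemma_coerc} and the proof of Theorem~\ref{Th_delta_gauss}.
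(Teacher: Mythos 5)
Your proposal follows the same skeleton as the paper's proof: subtract the two Galerkin forms to get $a(\boldsymbol{w},\boldsymbol{\phi}_h)=\int_\Omega(\boldsymbol{f}_S-\boldsymbol{f}_\varepsilon)\cdot\boldsymbol{\phi}_h\,d\Omega$, Taylor-expand the centred difference of $\delta_\varepsilon$ so that the first-order term reproduces $\boldsymbol{f}_S$ exactly and only a third-order remainder of size $(\Delta x)^{n+2}\partial_i^3\delta_\varepsilon$ survives, then close with coercivity (Lemma \ref{lemma_coerc}) by taking $\boldsymbol{\phi}_h=\boldsymbol{w}$ and cancelling one power of $\|\boldsymbol{w}\|_{H^1}$. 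Where you genuinely diverge is in how the remainder is measured, and your choice is the more defensible one. The paper pairs $\|\sum_i\boldsymbol{e}_i\partial_i^3\delta_\varepsilon\|_{L^\infty(\Omega)}$ with $\|\boldsymbol{\phi}_h\|_{H^1(\Omega)}$ and then asserts $\partial_i^3\delta_\varepsilon=\mathcal{O}(\varepsilon^{-3})$; but in the sup-norm the correct scaling is $\|\partial_i^3\delta_\varepsilon\|_{L^\infty}=\mathcal{O}(\varepsilon^{-n-3})$ (write $\delta_\varepsilon(\boldsymbol{y})=\varepsilon^{-n}\Psi(\boldsymbol{y}/\varepsilon)$), so the $\varepsilon^{-3}$ rate is only recovered in the $L^1$ sense, exactly as your rescaling computation $\|\partial_i^3\delta_\varepsilon\|_{L^1(\mathbb{R}^n)}=\varepsilon^{-3}\|\psi'''\|_{L^1(\mathbb{R})}$ shows. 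Your $L^1$--$L^\infty$ pairing (using boundedness of the Lagrangian basis fields, as the paper itself does in Theorem \ref{Th_delta_gauss}) therefore delivers the stated bound in every dimension, and you are also right to flag that the alternative $H^{-1}$ route via $\|\partial_i^2\delta_\varepsilon\|_{L^2}=\mathcal{O}(\varepsilon^{-2-n/2})$ only matches $\varepsilon^{-3}$ when $n=2$. In short: same decomposition and same closing argument, but your handling of the key remainder estimate repairs the one step where the paper's stated norms do not actually produce the claimed $\varepsilon^{-3}$ factor.
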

\begin{proof}
Using bilinearity of $a(.,.)$ gives upon setting 
$\boldsymbol{w}=\boldsymbol{u}^{\varepsilon}_h-\boldsymbol{u}^S_h$ the
following equation:
$$
a(\boldsymbol{w},\boldsymbol{\phi}_h) = \int_{\Omega} (\boldsymbol{f}_{\varepsilon} - \boldsymbol{f}_{S}) \cdot \boldsymbol{\phi}_h d \Omega.
$$
Using Taylor's Theorem for multivariate functions on smoothed delta
distributions, we get the following result for the right-hand side:
\begin{equation}
\begin{aligned}
\int_{\Omega} (\boldsymbol{f}_{\epsilon} - \boldsymbol{f}_S) \cdot \boldsymbol{\phi}_h d \Omega =
\int_{\Omega} \frac{P(\boldsymbol{x}')}{48} (\Delta x)^{n+2}
\sum_{i=1}^{n} \boldsymbol{e}_i \frac{\partial^3 \delta_{\epsilon}(\boldsymbol{\hat{x}}-\boldsymbol{x}')}{\partial x_i^3} \cdot \boldsymbol{\phi}_h d \Omega,
\end{aligned}
\end{equation}
for $\boldsymbol{\hat{x}}$ between $\boldsymbol{x}$ and $\boldsymbol{x}'$.
The magnitude of the above expression can be estimated from above by
\begin{equation}
\begin{aligned}
|\int_{\Omega} (\boldsymbol{f}_{\epsilon} - \boldsymbol{f}_S) \cdot \boldsymbol{\phi}_h d \Omega| \leqslant
\frac{P(\boldsymbol{x}')}{48} (\Delta x)^{n+2} || \sum_{i=1}^{n} \boldsymbol{e}_i \frac{\partial^3 \delta_{\epsilon}}{\partial x_i^3} ||_{L^{\infty}(\Omega)}  || \boldsymbol{\phi}_h ||_{\boldsymbol{H^1}(\Omega)}.
\end{aligned}
\end{equation}
Using Lemma \ref{lemma_coerc}, this gives
$$
K || \boldsymbol{w} ||^2_{\boldsymbol{H^1}(\Omega)} \leqslant 
a(\boldsymbol{w},\boldsymbol{w}) \leqslant 
\frac{P(\boldsymbol{x}')}{48} (\Delta x)^{n+2} || \sum_{i=1}^{n} \boldsymbol{e}_i \frac{\partial^3 \delta_{\epsilon}}{\partial x_i^3} ||_{L^{\infty}(\Omega)}  || \boldsymbol{\phi}_h ||_{\boldsymbol{H^1}(\Omega)}.
$$
Division by $K || \boldsymbol{w} ||^2_{\boldsymbol{H^1}(\Omega)}$ gives
$$
|| \boldsymbol{w} ||_{\boldsymbol{H^1}(\Omega)} \leqslant 
\frac{P(\boldsymbol{x}')}{48} (\Delta x)^{n+2} || \sum_{i=1}^{n} \boldsymbol{e}_i \frac{\partial^3 \delta_{\epsilon}}{\partial x_i^3} ||_{L^{\infty}(\Omega)}.
$$
We bear in mind that $\frac{\partial^3 \delta_{\epsilon}}{\partial x_i^3} = \mathcal{O}(\varepsilon^{-3})$, 
this implies that there is an $L_2>0$ such that
$$
\frac{1}{(\Delta x)^n} || \boldsymbol{u}_h^S - \boldsymbol{u}_h^{\epsilon} ||_{\boldsymbol{H^1}(\Omega)} \leqslant L_2 \frac{(\Delta x)^2}{\varepsilon^3}.  
$$
\end{proof}

With the two theorems above, we have proved that the solution to $(BVP_\varepsilon)$ converges to the solution to $(BVP)$, and the solution to $(BVP_{SP})$ converges to the solution to $(BVP_\varepsilon)$. Hence, we can derive the following theorem:
\begin{theorem}
Let $\boldsymbol{u}_h$ be the Galerkin solution to $(BVP)$ and $\boldsymbol{u}_h^S$ be the solution to $(BVP_{SP})$, let
$\varepsilon = \mathcal{O}(\Delta x)^p$ and ${\Delta x}\rightarrow{0}$.
If $0 < p < (2+n)/3$ then
$\boldsymbol{u}_h^S$ converges to $\boldsymbol{u}_h$ in the $H^1$--norm,
and $\boldsymbol{u}_h^S$ converges to $\boldsymbol{u}$ in the 
$H^1$--norm. %\textcolor{red}{Hence, the solution $\boldsymbol{u}^S$ converges to $\boldsymbol{u}$.}
\end{theorem}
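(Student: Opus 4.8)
The plan is to obtain the statement as a direct corollary of Theorems \ref{Th_delta_gauss} and \ref{Th_gauss_SPH} via the triangle inequality. I would introduce the intermediate Galerkin solution $\boldsymbol{u}_h^{\varepsilon}$ of $(BVP_{\varepsilon})$ and write
$$
\| \boldsymbol{u}_h^S - \boldsymbol{u}_h \|_{H^1(\Omega)} \leqslant \| \boldsymbol{u}_h^S - \boldsymbol{u}_h^{\varepsilon} \|_{H^1(\Omega)} + \| \boldsymbol{u}_h^{\varepsilon} - \boldsymbol{u}_h \|_{H^1(\Omega)} .
$$
By Theorem \ref{Th_delta_gauss} the second term is bounded by $L_1 (\Delta x)^{(n-1)/2}\varepsilon$, and by Theorem \ref{Th_gauss_SPH} the first term is bounded by $L_2 (\Delta x)^{n+2}/\varepsilon^{3}$ (after clearing the $(\Delta x)^{-n}$ factor appearing in the statement of that theorem). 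No new machinery beyond the coerciveness of $a(\cdot,\cdot)$ (Lemma \ref{lemma_coerc}) and these two cited theorems is needed.

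The second step is the bookkeeping of exponents. Substituting $\varepsilon = \mathcal{O}((\Delta x)^p)$ converts the two bounds into $\mathcal{O}((\Delta x)^{(n-1)/2+p})$ and $\mathcal{O}((\Delta x)^{\,n+2-3p})$ respectively. For every $n \geqslant 1$ and every $p>0$ one has $(n-1)/2 + p > 0$, so the contribution stemming from replacing the Dirac distribution by the Gaussian always vanishes as $\Delta x \to 0$; this is precisely why no lower bound on $p$ beyond positivity is required. The exponent $n+2-3p$ coming from the Gaussian-to-SPH replacement is positive exactly when $p < (2+n)/3$, which is the hypothesis. Hence, under $0 < p < (2+n)/3$, both terms tend to zero, giving $\| \boldsymbol{u}_h^S - \boldsymbol{u}_h \|_{H^1(\Omega)} \to 0$, i.e. the first assertion.

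For the second assertion I would chain this with the convergence of the Lagrangian finite-element solution itself: by \cite{bertoluzza2018local}, together with the one- and two-dimensional analyses of \cite{peng2019numerical, peng2019pointforces}, the Galerkin solution $\boldsymbol{u}_h$ of $(GF)$ approaches the solution $\boldsymbol{u}$ of $(BVP)$ under mesh refinement, so one more triangle inequality $\| \boldsymbol{u}_h^S - \boldsymbol{u}\| \leqslant \| \boldsymbol{u}_h^S - \boldsymbol{u}_h \| + \| \boldsymbol{u}_h - \boldsymbol{u}\|$ closes the argument. I expect the only genuinely delicate point to be this last limit: since $\boldsymbol{u} \notin \boldsymbol{H}^1(\Omega)$, the convergence $\boldsymbol{u}_h \to \boldsymbol{u}$ must be read either on compact subsets of $\Omega$ away from $\Gamma_c$ or in a topology weaker than $H^1$, and the statement should be interpreted in that sense; one must also be mindful that $\Delta x \to 0$ (cell shrinking) and the mesh refinement are two distinct limiting parameters, so the two estimates above should be combined as a joint limit rather than read off naively.
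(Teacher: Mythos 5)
Your proposal is correct and follows essentially the same route as the paper: the triangle inequality through the intermediate solution $\boldsymbol{u}_h^{\varepsilon}$, the bounds $L_1 (\Delta x)^{(n-1)/2}\varepsilon$ and $L_2 (\Delta x)^{n+2}/\varepsilon^3$ from Theorems \ref{Th_delta_gauss} and \ref{Th_gauss_SPH}, and the exponent bookkeeping yielding $0 < p < (2+n)/3$. In fact you go slightly further than the paper, whose proof silently omits the second assertion (convergence of $\boldsymbol{u}_h^S$ to $\boldsymbol{u}$); your remarks on needing the finite-element convergence result of \cite{bertoluzza2018local}, on the fact that $\boldsymbol{u}\notin \boldsymbol{H}^1(\Omega)$ forcing a weaker or localized reading of that limit, and on $\Delta x$ and the mesh parameter being distinct limits, are all legitimate points that the paper leaves unaddressed.
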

\begin{proof}
Denote 	$\boldsymbol{u}_h$ and $\boldsymbol{u}^S_h$ to be the Galerkin solution to $(BVP_\varepsilon)$ and $(BVP_{SP})$.  
Firstly, we consider 
\begin{align*}
\|\boldsymbol{u}_h-\boldsymbol{u}^S_h\|&=
\|\boldsymbol{u}_h-\boldsymbol{u^\varepsilon}_h+\boldsymbol{u^\varepsilon}_h-\boldsymbol{u}^S_h\|\leqslant \|\boldsymbol{u}_h-\boldsymbol{u^\varepsilon}_h\|+\|\boldsymbol{u^\varepsilon}_h-\boldsymbol{v^\varepsilon}_h\| \\ 
&\leqslant L_1 (\Delta x)^{(n-1)/2} \varepsilon + L_2 \frac{(\Delta x)^{2+n}}{\varepsilon^3}  \\ 
&= L_1 (\Delta x)^{(n-1)/2+p} + L_2 (\Delta x)^{2+n-3p} \rightarrow 0,\\
&\text{ as } \Delta x \rightarrow 0, \text{ if } 0 < p < (2+n)/3.
\end{align*}

From this inequality, we conclude that the finite element solution
of the smooth particle method converges to the solution of the 
immersed boundary method upon letting $\Delta x \rightarrow 0$ and
choosing $\varepsilon = \mathcal{O}(\Delta x)^p)$ for $0 < p < (2+p)/3$.
\end{proof}

\section{Numerical Results in Two Dimensions}
\noindent
To demonstrate the consistency between the immersed boundary approach and two alternative methods, we consider a square-shape cell in the computational domain. A homogeneous boundary condition is imposed for the exterior boundary of the computational domain. The parameter values are listed in Table \ref{Tbl_paras}. All of them are educated guesses in this study and they are dimensionless.

\begin{table}[htpb]\footnotesize
\centering 
\caption{Parameter values used in the comparison of plasticity and morphoelasticity}
\begin{tabular}{m{3cm}<{\centering}m{7cm}<{\centering}m{1cm}<{\centering}}
	\toprule
	{\bf Parameter}& {\bf Description} & {\bf Value} \\
	\toprule
	$E$ & Substrate stiffness & $1$ \\
	$\beta$ & Factor between the cell stiffness and the substrate stiffness in Eq \ref{Eq_stiffness} & $10^{-5}$\\
	$R$ & Length of side of square-shape biological cells & $6$\\
	$\nu$ & Poisson's ratio & $0.48$\\
	$P$ & Magnitude of the temporary forces per unit length& $1$\\
	$x_0$ & Length of the computational domain in x-coordinate & $20$\\
	$y_0$ & Length of the computational domain in y-coordinate & $20$\\
	$w_x$ & Length of the wound domain in x-coordinate & $10$\\
	$w_y$ & Length of the wound domain in y-coordinate & $10$\\
	\bottomrule
\end{tabular}
\label{Tbl_paras}
\end{table}

According to Proposition \ref{Prop_hole_consis}, to compare the immersed boundary approach and the 'hole' approach, the stiffness inside the biological cell needs to be adjusted, since two approaches are consistent with $\beta\rightarrow 0$. However, in the implementation, we can only select a very small positive value instead of $\beta=0$.

\begin{figure}[htpb]
\centering
\subfigure[Immersed boundary approach]{
	\includegraphics[width=0.31\textwidth]{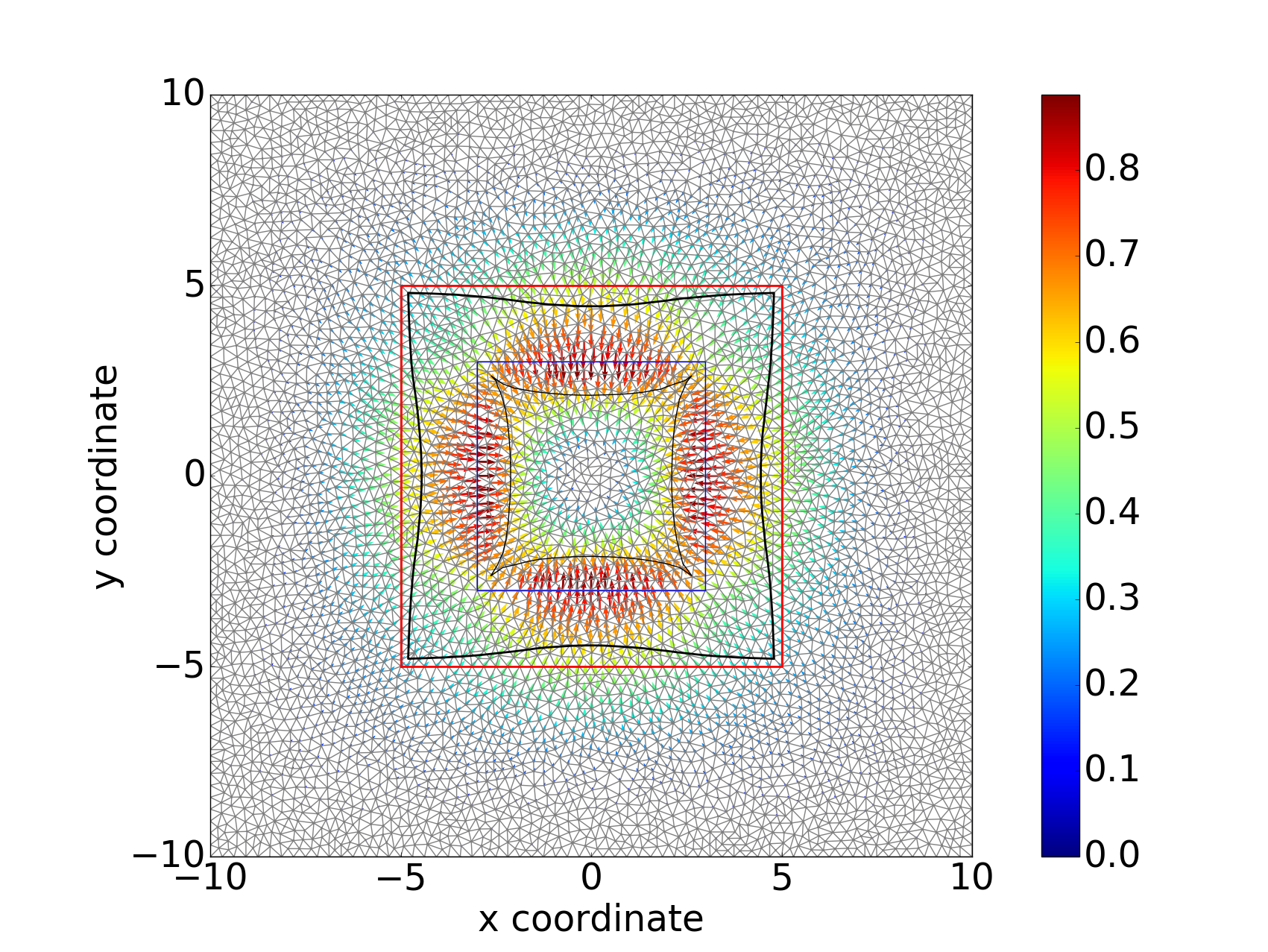}}
\subfigure[The 'hole' approach]{
	\includegraphics[width=0.31\textwidth]{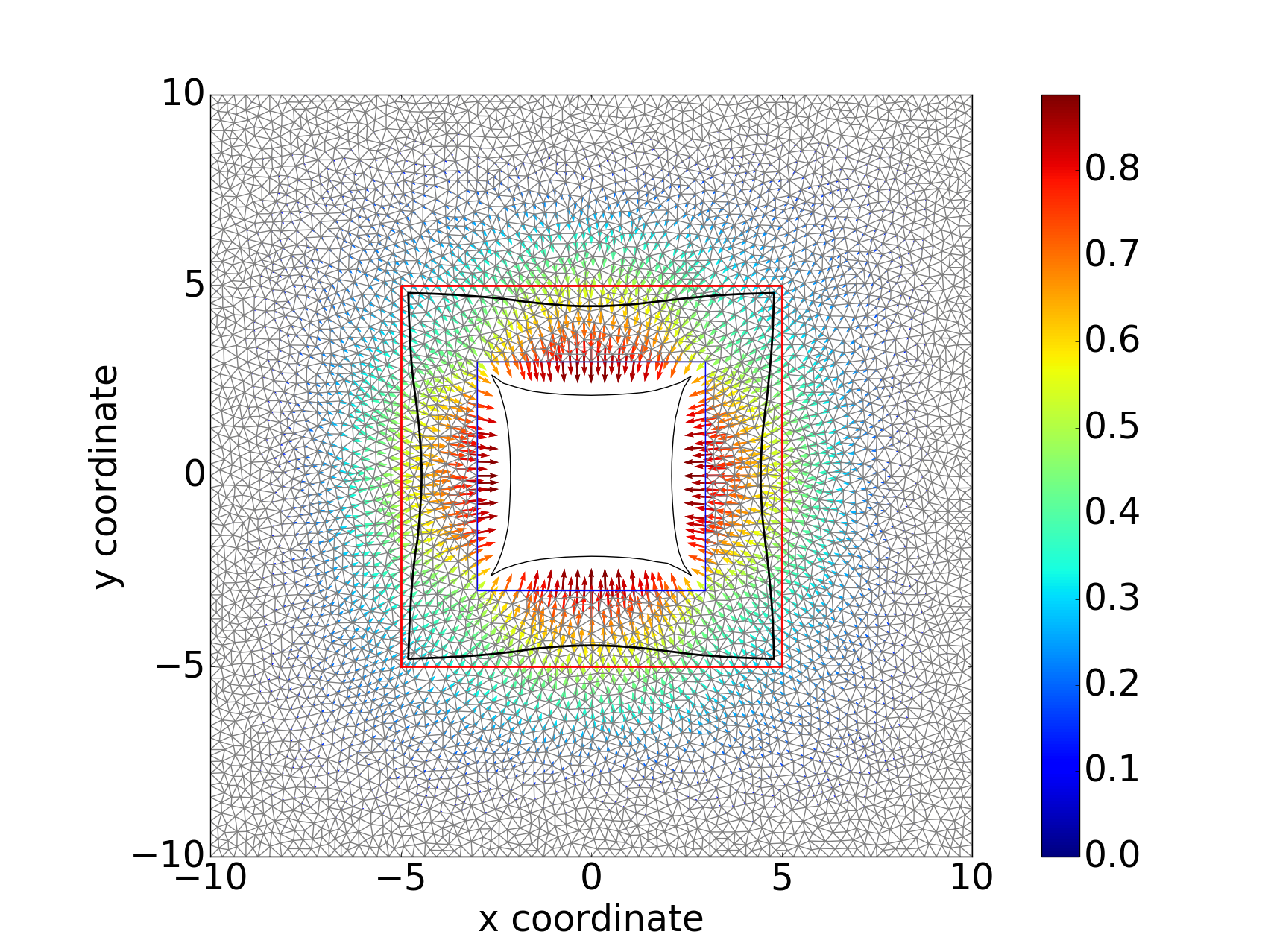}}
\subfigure[Smoothed particle approach]{
	\includegraphics[width=0.31\textwidth]{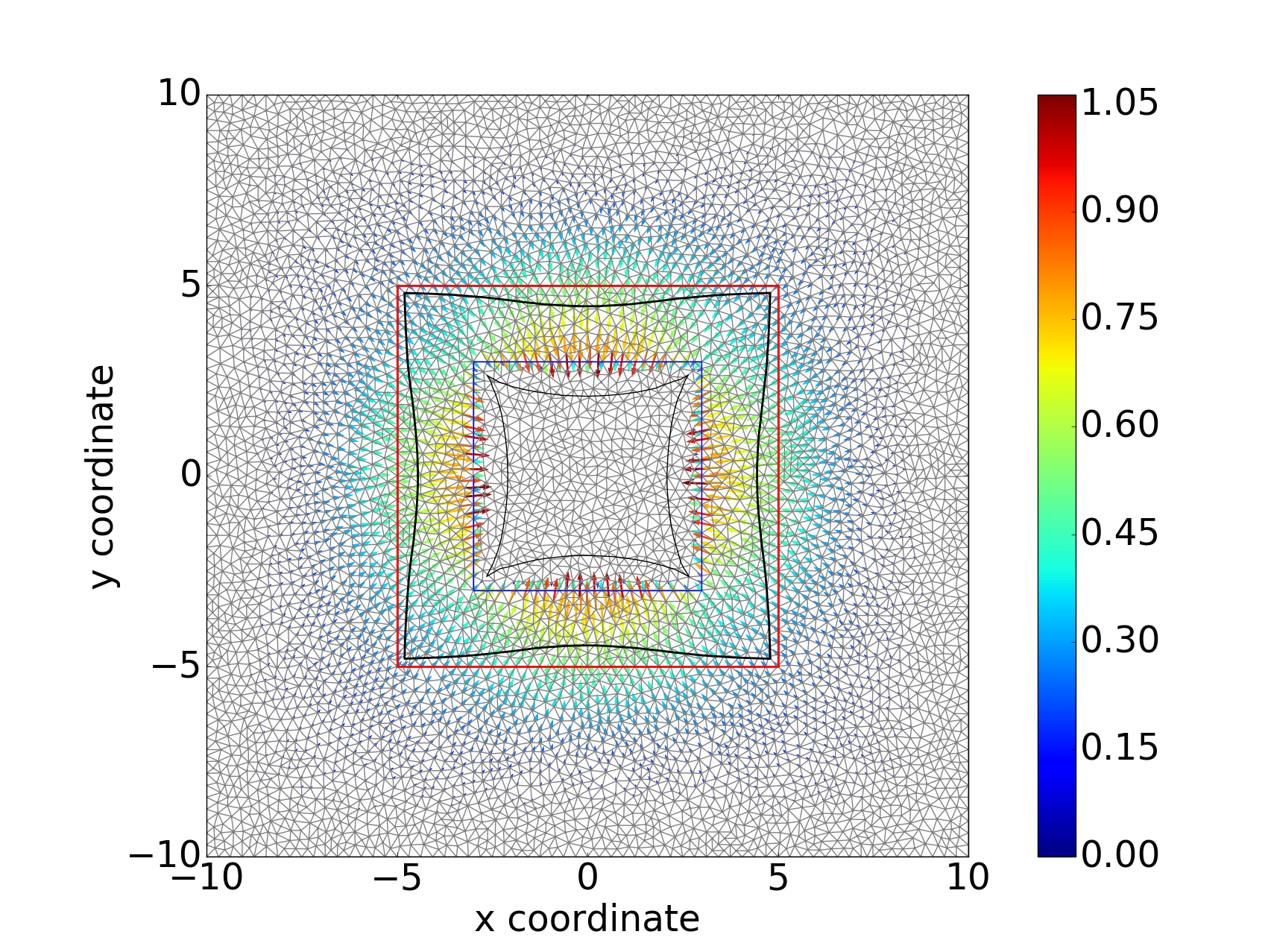}}
\caption{For the different stiffness inside and outside of the cell, the solution (i.e. the displacement) is showed in each approach. Black curves show the deformed region of vicinity and the cell, and blue curve represents the cell.}
\label{Fig_dis_mod}
\end{figure}

Numerical results are presented in Figure \ref{Fig_dis_mod}, Table \ref{Tbl_immersed_conti_area} and Table \ref{Tbl_immersed_conti_error}. From the figure, there is no significant difference, except that in the smoothed particle approach, the displacement is a little bit larger than in the other two approaches. The reduction ratio of either the vicinity region or the cell appears to yield a tiny difference, which implies that three approaches are numerically consistent. However, the 'hole' approach takes slightly more computation time than the other two approaches. Therefore and due to the numerical complications in needing adaptive meshes, it will not be elected when we deal with the displacement and deformation of large number of cells, even though its convergence rate improves significantly comparing to the immersed boundary approach. As for the smoothed particle approach, the convergence rate of the $L_2-$norm does not improve, while the computational efficiency does. 

\begin{table}[htpb]\footnotesize
\centering
\caption{The percentage of area change of cell and vicinity region, and time cost of various approaches, if the stiffness is different inside and outside the biological cell.}
\begin{tabular}{m{3.5cm}<{\centering}m{2.8cm}<{\centering}m{2.8cm}<{\centering}m{2.8cm}<{\centering}}
	\toprule
	& {\bf The immersed boundary approach} & {\bf The 'hole' approach} & {\bf The smoothed particle approach}  \\
	\midrule
	Cell Area Reduction Ratio(\%) & $45.84096$ & $45.71401$ & $45.18525$  \\
	Vicinity Area Reduction Ratio(\%) & $14.274671$ & $14.15804$ & $14.16180$ \\
	Time Cost$(s)$ & $0.78643$ & $1.10195$ & $0.75832$\\
	\bottomrule
\end{tabular}
\label{Tbl_immersed_conti_area}
\end{table}

\begin{table}[htpb]\footnotesize
\centering
\caption{The $L^2-norm $of the solution (i.e. the displacement) with different mesh size in each approach, if the stiffness is different inside and outside the biological cell.}
\begin{tabular}{m{2cm}<{\centering}m{3.5cm}<{\centering}m{3.5cm}<{\centering}m{3.5cm}<{\centering}}
	\toprule
	& {\bf The immersed boundary approach} & {\bf The 'hole' approach} & {\bf The smoothed particle approach}  \\
	\midrule
	h & $5.8833092$ & $5.9256424$ & $5.8981846$  \\
	h/2 & $5.9302898$ & $5.952170$ & $5.9324678$ \\
	h/4 & $5.9484929$ & $5.9593735$ & $5.9486686$\\
	Convergence rate & $1.36788$ & $1.88060$ & $1.08102$\\
	\bottomrule
\end{tabular}
\label{Tbl_immersed_conti_error}
\end{table}
\section{Conclusion}
\label{conclusion}
For the dimensionality exceeding one, the existence of Dirac Delta
distributions in the elasticity equation results into a singular
solution. We analyse the solutions based on Galerkin approximations
with Lagrangian basis functions for different approaches that are
consistent if cell sizes and smoothness parameters tend to zero.
We have shown that all the alternative approaches are numerically consistent
with the immersed boundary approach. The current paper has
investigated and extended earlier findings to multi-dimensionality. The current analysis has been 
carried out for simple, linear elasticity. In the future, 
we plan to extend our findings to the visco-elasticity 
equations. This visco-elastic model contains a damping 
term, and still retains a linear nature. 
Furthermore, we are also interested in analyzing the
above considered principles for a morphoelastic model.
A morpho-elastic model has the major advantage of incorporating
permanent deformations. A major complication is its 
nonlinear nature.

\section*{Acknowledgment}
\noindent
Authors acknowledge the China Scholarship Council (CSC) for financial support to this project.

%\section*{References}
% \bibliographystyle{elsarticle-num} 
%\bibliographystyle{abbrvnat}
\bibliography{template}

\begin{thebibliography}{14}
\providecommand{\natexlab}[1]{#1}
\providecommand{\url}[1]{\texttt{#1}}
\expandafter\ifx\csname urlstyle\endcsname\relax
  \providecommand{\doi}[1]{doi: #1}\else
  \providecommand{\doi}{doi: \begingroup \urlstyle{rm}\Url}\fi

\bibitem[Atkinson and Han(2005)]{atkinson2005theoretical}
K.~Atkinson and W.~Han.
\newblock \emph{Theoretical numerical analysis}, volume~39.
\newblock Springer, 2005.

\bibitem[Bertoluzza et~al.(2018)Bertoluzza, Decoene, Lacouture, and
  Martin]{bertoluzza2018local}
S.~Bertoluzza, A.~Decoene, L.~Lacouture, and S.~Martin.
\newblock Local error estimates of the finite element method for an elliptic
  problem with a dirac source term.
\newblock \emph{Numerical Methods for Partial Differential Equations},
  34\penalty0 (1):\penalty0 97--120, 2018.

\bibitem[Boon et~al.(2016)Boon, Koppenol, and Vermolen]{boon2016multi}
W.~Boon, D.~Koppenol, and F.~Vermolen.
\newblock A multi-agent cell-based model for wound contraction.
\newblock \emph{Journal of biomechanics}, 49\penalty0 (8):\penalty0 1388--1401,
  2016.

\bibitem[Braess(2007)]{braess2007finite}
D.~Braess.
\newblock \emph{Finite Elements: Theory, Fast Solvers, and Applications in
  Solid Mechanics}.
\newblock Cambridge University Press, 2007.

\bibitem[Eichler and Carlson(2006)]{eichler2006modeling}
M.~J. Eichler and M.~A. Carlson.
\newblock Modeling dermal granulation tissue with the linear
  fibroblast-populated collagen matrix: a comparison with the round matrix
  model.
\newblock \emph{Journal of dermatological science}, 41\penalty0 (2):\penalty0
  97--108, 2006.

\bibitem[Enoch and Leaper(2008)]{enoch2008basic}
S.~Enoch and D.~J. Leaper.
\newblock Basic science of wound healing.
\newblock \emph{Surgery (Oxford)}, 26\penalty0 (2):\penalty0 31--37, 2008.

\bibitem[Haertel et~al.(2014)Haertel, Werner, and
  Sch{\"a}fer]{haertel2014transcriptional}
E.~Haertel, S.~Werner, and M.~Sch{\"a}fer.
\newblock Transcriptional regulation of wound inflammation.
\newblock In \emph{Seminars in Immunology}, volume~26, pages 321--328.
  Elsevier, 2014.

\bibitem[Koppenol(2017)]{koppenol2017biomedical}
D.~Koppenol.
\newblock Biomedical implications from mathematical models for the simulation
  of dermal wound healing.
\newblock \emph{PhD-thesis at the Delft University of Technology, the
  Netherlands}, 2017.

\bibitem[Lacouture(2015)]{lacouture2015numerical}
L.~Lacouture.
\newblock A numerical method to solve the stokes problem with a punctual force
  in source term.
\newblock \emph{Comptes Rendus M{\'e}canique}, 343\penalty0 (3):\penalty0
  187--191, 2015.

\bibitem[Li and Wang(2011)]{li2011fibroblasts}
B.~Li and J.~H.-C. Wang.
\newblock Fibroblasts and myofibroblasts in wound healing: force generation and
  measurement.
\newblock \emph{Journal of tissue viability}, 20\penalty0 (4):\penalty0
  108--120, 2011.

\bibitem[Peng and Vermolen(2019{\natexlab{a}})]{peng2019numerical}
Q.~Peng and F.~Vermolen.
\newblock Numerical methods to solve elasticity problems with point sources.
\newblock \emph{Reports of the Delft Institute of Applied Mathematics, Delft
  University, the Netherlands}, 1389-6520\penalty0 (19-02), 2019{\natexlab{a}}.

\bibitem[Peng and Vermolen(2019{\natexlab{b}})]{peng2019pointforces}
Q.~Peng and F.~Vermolen.
\newblock Point forces and their alternatives in cell-based models for skin
  contraction.
\newblock \emph{Reports of the Delft Institute of Applied Mathematics, Delft
  University, the Netherlands}, 1389-6520\penalty0 (19-03), 2019{\natexlab{b}}.

\bibitem[Scott(1973)]{scott1973finite}
R.~Scott.
\newblock Finite element convergence for singular data.
\newblock \emph{Numerische Mathematik}, 21\penalty0 (4):\penalty0 317--327,
  1973.

\bibitem[Vermolen and Gefen(2015)]{vermolen2015semi}
F.~Vermolen and A.~Gefen.
\newblock Semi-stochastic cell-level computational modelling of cellular
  forces: Application to contractures in burns and cyclic loading.
\newblock \emph{Biomechanics and Modeling in Mechanobiology}, 14\penalty0
  (6):\penalty0 1181--1195, 2015.

\end{thebibliography}

%% else use the following coding to input the bibitems directly in the
%% TeX file.

%\begin{thebibliography}{00}
%
%%% \bibitem{label}
%%% Text of bibliographic item
%
%\bibitem{}
%
%\end{thebibliography}
\end{document}